\newtheorem{theorem}{Theorem}
\newtheorem{lemma}[theorem]{Lemma}
\newtheorem{cor}[theorem]{Corollary}
\theoremstyle{definition}
\newtheorem{exam}[theorem]{Example}
\newtheorem{remark}[theorem]{Remark}
\newtheorem{question}{Question}
\newtheorem{conjecture}[theorem]{Conjecture}
\def\notdiv{{\hbox{$\not|\,$}}}
\def\min{\mathop{\mathrm{min}}}
\def\CC{\mathbb C}
\def\ZZ{\mathbb Z}
\def\PP{\mathbb P}
\def\K{{\bf K}}
\def\x{{\bf x}}
\def\kk{{\bf k}}
 \def\ord{{\rm ord}}
\def\p{\mathbf p}
\def\q{\mathbf q}
\def\gen{\mathfrak g}
\def\Ocal{\mathcal O}
\def\Mcal{\mathcal M}
\def\Rcal{\mathcal R}
\let\a\alpha
\let\b\beta
\let\e\epsilon
\let\g\gamma
\let\l\lambda
\let\r\rho
\begin{document}
\title{ On the exponential local-global principle for meromorphic functions and algebraic functions}
\author{Hsiu-Lien Huang }
\address{Institute of Mathematics\\Academia Sinica\\6F, Astronomy-Mathematics Building\\No. 1, Sec. 4, Roosevelt Road\\ Taipei
10617\\Taiwan} \email{hlhuang@math.sinica.edu.tw}
\author{Andreas Schweizer }
\address{Department of Mathematics\\Korea Advanced Institute of Science and Technology (KAIST)\\ Daejeon 305-701
 \\South Korea} \email{schweizer@kaist.ac.kr}
\author{Julie Tzu-Yueh Wang}
\address{Institute of Mathematics\\Academia Sinica\\6F, Astronomy-Mathematics Building\\No. 1, Sec. 4, Roosevelt Road \\ Taipei
10617\\Taiwan} \email{jwang@math.sinica.edu.tw}
\thanks{The first and third author are partially supported by Taiwan's NSC grant 101-2115-M-001 -001 -MY2.
The second author is supported by ASARC in South Korea. }
\begin{abstract}
We prove the rank one case of Skolem's Conjecture on the exponential
local-global principle for algebraic functions and discuss its analog
for meromorphic functions.
\end{abstract}
\thanks{2010\ {\it Mathematics Subject Classification.} Primary 30D35
Secondary 11J97, 11D61, 11R58.}

\baselineskip=15truept
\maketitle \pagestyle{myheadings}
\markboth{HSIU-LIEN HUANG, ANDREAS SCHWEIZER and JULIE TZU-YUEH WANG }{ EXPONENTIAL LOCAL-GLOBAL PRINCIPLE }


\section{Introduction and results}
Let $K$ be a number field and $S$ a finite set of places of $K$ containing 
all the archimedean places. Denote by
  $\Ocal_S:=\{\a\in K: |\a|_v\le 1 \text{ for places } v\notin S\}$
the ring of $S$-integers and by
 ${\Ocal^*_S}:=\{\a\in K: |\a|_v= 1 \text{ for places } v\notin S\} $
the group of $S$-units.
 Let $\l_1,\hdots,\l_m$ be non-zero elements in $\Ocal_S$  and   $\a_1,\hdots,\a_m$ in  ${\Ocal^*_S}.$
 For every $n\in\ZZ$, we consider the following {\it power sum} with respect to $\l_i$ and $\a_i$,
 $$
 A(n):=\l_1\a_1^n+\cdots+\l_m\a_m^n\in\Ocal_S.
 $$
The following conjecture was suggested by Skolem   in \cite{Sk}.
\begin{conjecture}[Exponential local-global principle]\label{Skolem}
Assume that for every non-zero ideal $\frak a$ of the ring $\Ocal_S$, there exists $k \in \ZZ$ such that
$A(k)\in \frak a$.  Then there exists $n\in \ZZ$ such that $A(n)=0$.
\end{conjecture}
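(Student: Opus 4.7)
The plan is to settle the \emph{rank-one case} of the conjecture — the case in which the multiplicative subgroup of $\Ocal_S^*$ generated by $\a_1,\ldots,\a_m$ has rank one modulo torsion, which is the case addressed by this paper. Write $\a_i=\z_i\a^{e_i}$ with a common base $\a\in\Ocal_S^*$ of infinite order, exponents $e_i\in\ZZ$, and roots of unity $\z_i\in K$, and let $N$ be a common multiple of the orders of the $\z_i$'s. Then for each residue class $n_0\in\{0,1,\ldots,N-1\}$,
\[
A(n_0+Nk)\;=\;P_{n_0}(\b^k),\qquad \b:=\a^N,
\]
where $P_{n_0}(X):=\sum_i \l_i\z_i^{n_0}\a^{e_i n_0}X^{e_i}\in\Ocal_S[X,X^{-1}]$. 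Since there are only $N$ residue classes, a pigeonhole applied to a descending chain of ideals $\mathfrak a_1\supsetneq\mathfrak a_2\supsetneq\cdots$ with $\bigcap_j\mathfrak a_j=(0)$ concentrates infinitely many of the ideal-hitting indices into a single class $n_0$. With $P:=P_{n_0}$, the task reduces to: \emph{if $P\in\Ocal_S[X,X^{-1}]$ and $\b\in\Ocal_S^*$ of infinite order satisfy that $\{P(\b^k):k\in\ZZ\}$ meets every nonzero ideal of $\Ocal_S$, then $P(\b^{k_0})=0$ for some $k_0\in\ZZ$.}

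Factor $P(X)=cX^{-r}\prod_{j=1}^d(X-\g_j)^{m_j}$ with $c\in K^*$ and $\g_j\in\overline{K}^{\,*}$. The desired conclusion is equivalent to: some $\g_j$ equals $\b^{k_0}$ for an integer $k_0$. I would argue by contradiction, assuming no $\g_j$ lies in $\b^{\ZZ}$. The key claim is then that there exists a prime $\mathfrak p\notin S$, outside the (finite) support of $c$, such that $v_{\mathfrak p}(\b^k-\g_j)=0$ for \emph{every} $k\in\ZZ$ and \emph{every} $j$; this would force $v_{\mathfrak p}(P(\b^k))=0$ for all $k$, contradicting the hypothesis that $P(\b^k)\in\mathfrak p$ is solvable. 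Locating such a $\mathfrak p$ is an independence statement: if $\g_j\notin\b^{\ZZ}\cdot\mu_\infty(K)$, then Chebotarev density applied to the Kummer-type extensions $K(\b^{1/n},\g_j^{1/n})$ shows the density of primes $\mathfrak p$ at which $\g_j$ lies in the cyclic group generated by the image of $\b$ in the residue field is strictly less than one, and a careful intersection argument over $j=1,\ldots,d$ should yield the required $\mathfrak p$.

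The main obstacle, as I see it, is precisely this intersection step: the naive union bound $\sum_j\d_j$ on the ``bad'' densities can exceed $1$, so one has to compute the joint density directly, analyzing the Galois group of the compositum $K\bigl(\b^{1/n},\g_1^{1/n},\ldots,\g_d^{1/n}\bigr)$ and bookkeeping multiplicative relations among the $\g_j$'s and $\b$ (and between $\g_j$ and the roots of unity absorbed at the reduction step). As a complementary source of rigidity, I would bring in $p$-adic analysis: Strassmann's theorem applied to the one-parameter analytic function $t\mapsto P\bigl(\b^{k_1}\exp_{\mathfrak p}(t\log_{\mathfrak p}\b^M)\bigr)$ on $\ZZ_p$ — for $M$ large enough that $\b^M$ lies in the domain of $\log_{\mathfrak p}$ — shows it has only finitely many zeros in $\ZZ_p$ unless it vanishes identically, the latter being ruled out because $P$ has finite degree. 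This one-parameter rigidity is exactly what the rank-one hypothesis supplies and what the higher-rank case lacks, which explains why the general conjecture remains open.
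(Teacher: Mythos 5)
The statement you are addressing is Conjecture~\ref{Skolem} (Skolem's exponential local--global principle), which the paper does not prove; it is an open conjecture, recalled as motivation. What the paper reports is that the \emph{rank-one case} was established by Bartolome, Bilu and Luca in \cite{BBF}, and the paper's own contributions are analogues of that rank-one result for meromorphic functions (Theorem~\ref{main}) and for algebraic function fields (Theorems~\ref{functionfields}, \ref{functionfieldsconstant}). So there is no ``paper's own proof'' of the conjecture to compare against; the closest stand-in is the proof of Theorem~\ref{functionfields1}, which the paper explicitly models on \cite{BBF}.

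That proof uses a route quite unlike yours. After a reduction broadly similar to your reduction to $P(\b^k)$, the polynomial $P$ is split as $P=P_{\mathrm{ind}}P_{\mathrm{dep}}$ according to whether each root is multiplicatively independent of, or dependent on, the base. For $P_{\mathrm{ind}}$ one invokes the gcd theorem of Corvaja--Zannier (Theorem~\ref{CZgcd}, resp.\ \cite{CZ} in the number-field case), which gives a sublinear-in-height bound for $N_S(\gcd(1-a,1-b))$ when $a,b$ are multiplicatively independent $S$-units. For $P_{\mathrm{dep}}$ one proves an elementary dichotomy (Lemma~\ref{claimD}) by choosing $a=p^{\ell}q$ with $p,q$ cleverly related to the dependency exponents. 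A lower bound from the hypothesis (combined with a second-main-theorem/Baker-type inequality) then contradicts these upper bounds as $\ell\to\infty$. No Chebotarev density, no $p$-adic interpolation.

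Your proposed route has a genuine gap, and you put your finger on it yourself: the ``intersection step.'' For a single $\g_j\notin\b^{\ZZ}\cdot\mu_\infty$, the density of primes $\mathfrak p$ with $\bar\g_j\in\langle\bar\b\rangle$ is indeed less than $1$, but the union over $j=1,\dots,d$ can a priori have density $1$, and computing the joint density requires a full analysis of the Galois group of $K(\mu_n,\b^{1/n},\g_1^{1/n},\dots,\g_d^{1/n})$ with all multiplicative relations accounted for. You have not shown the joint density is less than $1$, nor that even density $<1$ suffices (one prime with $v_{\mathfrak p}(\b^k-\g_j)=0$ for all $k,j$ must additionally miss the support of $c$ and lie outside $S$). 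Until that is settled, the contradiction is not reached. The Strassmann remark does not repair this: Strassmann's theorem controls zeros of the $p$-adic analytic function $t\mapsto P(\b^{k_1}\exp(t\log\b^M))$ on $\ZZ_p$, whereas what the local hypothesis yields is a sequence $n_j$ with $v_{\mathfrak p}(P(\b^{n_j}))\to\infty$; by compactness one obtains a zero at some $t_\infty\in\ZZ_p$, but the entire difficulty of Skolem's conjecture is to force $t_\infty$ to be an \emph{integer}, and Strassmann's theorem says nothing about that. Indeed, this is exactly the wall that the Skolem--Mahler--Lech circle of ideas runs into, which is why \cite{BBF} (and the present paper's function-field analogue) went through gcd bounds and height inequalities instead.
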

Recently, Bartolome, Bilu and Luca \cite{BBF} proved this conjecture for the case
when the rank of the multiplicative group generated by $\a_1,\hdots,\a_m$  is one.
Their proof relies on the gcd theorem of Corvaja and Zannier \cite{CZ} and a celebrated inequality of Baker (see the first two contributions in \cite{Wu}).
We refer to  \cite{BBF}  for the statements of these theorems and also a survey of results related to Conjecture \ref{Skolem}.

Due to some interesting analogy between Diophantine approximation and Nevanlinna theory,
we are also interested in the corresponding statements for complex meromorphic functions.
Denote by  $\Rcal$  the ring of entire functions and by $\Rcal^*$ its group of
units (i.e. entire functions without zeros).
Also, $\Mcal$ is the field of functions that are meromorphic in the complex
plane. Occasionally we write $U_n$ for the group of $n$-th roots of unity.
\par
Let $\l_1,\hdots,\l_m$ be non-zero elements in $\Rcal$ and $f_1,\hdots,f_m$ 
in  ${\Rcal^*}.$
 For every $n\in\ZZ$, we define the {\it power sum} with respect to $\l_i$ 
and $f_i$ by
 $$
 B(n):=\l_1f_1^n+\cdots+\l_m f_m^n\in\Rcal.
 $$
It is tempting to ask the following question which seems to be the analogue of Conjecture \ref{Skolem} in the complex case.
\begin{question}\label{SkolemC}
Assume that for every non-zero ideal $\frak a$ of the ring $\Rcal$, there exists $k \in \ZZ$ such that
$B(k)\in \frak a$.  Then there exists $n\in {\mathbb Z}$ such that $B(n)=0$.
\end{question}
However, this question might be too naive since it has an easy answer due to
the following reason:  The assumption implies that for each $z_0 \in\mathbb C$
there  exists $k \in \ZZ$ such that $B(k)(z_0)=0$. Therefore, there exists
$n\in {\mathbb Z}$ such that $B(n)$ has uncountably 
many zeros which is impossible as $B(n)$ is an entire function.  This observation brings out the difference between the complex case and the arithmetic case, and also indicates that the assumption should be loosened up.  Instead of considering all ideals in $\Rcal$, we will restrict ourselves to more specific conditions.
\par
Let more generally $f_i, \l_i \in\Mcal^*$.
If the multiplicative group generated by the $f_i$ has rank $1$,
it is a finitely generated group of rank $1$, so it is a direct
product of its (finite) torsion subgroup and an infinite cyclic
group. But a torsion element in $\Mcal^*$ can only be a function
that is identically equal to a root of unity. Choosing a generator
$f$ of the infinite cyclic part, we can write
$$f_i =\e_i f^{r_i}$$
with $\e_i$ a root of unity and $r_i\in\ZZ$.
This form will be the most convenient to formulate our results.
\par
Basic notation, definitions and results will be collected in
Section \ref{Nevanlinna}.

  \begin{theorem}\label{main}
Let $f$ be a non-constant meromorphic function and  $\l_1,\hdots,\l_m$
meromorphic functions such that there exists $\varrho<1$ with
$$T_{\l_i}(r)\leq\frac{\varrho}{m+\widetilde{m}}T_f(r)+S_f(r)$$
for $i=1,\ldots,m$,
where $\widetilde{m}$ is the number of $\l_i$ that are not entire.
Fix an m-tuple of integers $(r_1,\hdots,r_m)$ and an m-tuple of roots
of unity $(\e_1,...,\e_m)$ and let
$$
 B(n):=\l_1 (z) (\e_1f^{r_1 }(z))^n+\cdots+\l_m (z)(\e_mf^{r_m}(z))^n.
$$
Let $N=\max\{r_1,\hdots,r_m\}-\min\{r_1,\hdots,r_m\}$. Fix  an integer
$e>\frac{2}{1-\varrho}$ such that $\e_i^e=1$ for all $i=1,2,\ldots,m$.
Let $a$ be a positive integer such that
$$p^{1+\ord_p a -\ord_p e} > N+\frac{2}{1-\varrho}$$
for every prime divisor $p$ of $e$. (This implies of course $e|a$.)
Assume that there exists an integer $k$ such that $B(k)$ vanishes at every
zero of $f^a -1$.
\par
If $e$ divides $k$, then $\l_1 + \cdots +\l_m \equiv 0$, i.e. $B(0)$
vanishes identically.
\par
If $e$ does not divide $k$, then $\sum\l_i \e_i^k X^{r_i}\in\Mcal [X,X^{-1}]$
is the zero polynomial; so in particular $B(k)$ vanishes identically.
\end{theorem}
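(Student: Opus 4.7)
The plan is to encode $B(k)$ as a polynomial in $f^k$ with meromorphic coefficients and then apply Nevanlinna's second main theorem to $f$, using the $a$-th roots of unity as target values, to force many of those coefficients to vanish. Setting $r_{\min}=\min r_i$ and
$$Q(X):=\sum_{i=1}^m \l_i\,\e_i^k\,X^{r_i-r_{\min}}\in\Mcal[X],$$
a polynomial of degree at most $N$, one has $B(k)=f^{kr_{\min}}Q(f^k)$. Since $f$ is never zero at a zero of $f^a-1$, the hypothesis is equivalent to requiring $Q(f^k)(z_0)=0$ at every $z_0$ with $f^a(z_0)=1$. Writing $f(z_0)=\z$ and $\h=\z^k$, so that $\h$ is a $d$-th root of unity with $d:=a/\gcd(a,k)$, this exactly says that the meromorphic function $Q(\h):=\sum_i\l_i\e_i^k\h^{r_i-r_{\min}}$ vanishes at $z_0$.

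Next I group these points by the value of $\h$. For each $d$-th root of unity $\h$, let $V_\h=\{\z:\z^a=1,\,\z^k=\h\}$ (a set of cardinality $\gcd(a,k)$), and put $E=\{\h:\h^d=1,\ Q(\h)\not\equiv 0\}$. For $\h\in E$, the first main theorem gives $\overline N_{Q(\h)}(r)\le T_{Q(\h)}(r)+O(1)$, and the hypothesis on the $T_{\l_i}$ yields $T_{Q(\h)}(r)\le\sum_i T_{\l_i}(r)+O(1)\le\varrho\,T_f(r)+S_f(r)$, so that
$$\sum_{\z\in V_\h}\overline N_{f-\z}(r)\ \le\ \overline N_{Q(\h)}(r)\ \le\ \varrho\,T_f(r)+S_f(r).$$
For $\h\notin E$ only the trivial bound $\overline N_{f-\z}(r)\le T_f(r)+S_f(r)$ is used. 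Plugging both into the second main theorem applied to $f$ with the $a$ distinct $a$-th roots of unity,
$$(a-2)\,T_f(r)\ \le\ \sum_{\z^a=1}\overline N_{f-\z}(r)+S_f(r),$$
splitting the right side according to $\h\in E$ or $\h\notin E$, and using $a=d\gcd(a,k)$, a direct rearrangement yields
$$|E|\bigl(\gcd(a,k)-\varrho\bigr)\le 2,\qquad\text{so in every case}\qquad |E|\le \frac{2}{1-\varrho}.$$

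The two conclusions then correspond to two ways of exploiting this inequality. If $e\nmid k$, pick a prime $p\mid e$ with $\ord_p k<\ord_p e$; since $e\mid a$, we have $\ord_p d=\ord_p a-\ord_p k\ge 1+\ord_p a-\ord_p e$, so by the hypothesis on $a$, $d\ge p^{\ord_p d}>N+2/(1-\varrho)$, and hence $|E^c|=d-|E|>N$. Thus $Q(\h)\equiv 0$ for at least $N+1$ distinct $\h$; a pointwise Vandermonde argument then forces every coefficient of $Q$ to vanish identically, i.e., $\sum_i\l_i\e_i^k X^{r_i}=0$ in $\Mcal[X,X^{-1}]$. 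If instead $e\mid k$, then $\gcd(a,k)\ge e$, and the elementary inequality $e>2/(1-\varrho)\ge 2+\varrho$ (valid for $\varrho\in[0,1)$) gives $|E|<1$, hence $E=\emptyset$. In particular $Q(1)\equiv 0$, and because $\e_i^k=1$ this reads $\sum_i\l_i\equiv 0$, as desired.

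The most delicate point is the interaction between the hypothesis and the possible poles of the $\l_i$: the zeros of $f-\z$ for $\z\in V_\h$ must be \emph{genuine} zeros of $Q(\h)$, not masked by a pole of some $\l_i$. This is in fact automatic from the hypothesis, since any pole of an $\l_i$ at a zero $z_0$ of $f^a-1$ would make $B(k)(z_0)$ infinite, contradicting $B(k)(z_0)=0$; the slightly unusual denominator $m+\widetilde m$ in the bound on $T_{\l_i}$ is calibrated so that $T_{Q(\h)}$ still stays below $\varrho\,T_f$. Beyond this, the $p$-adic step producing $d>N+2/(1-\varrho)$ and the pointwise Vandermonde argument are routine, but the numerical constants have to be tracked attentively.
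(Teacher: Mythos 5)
Your overall strategy is essentially the paper's: write $B(k)=f^{r_1 k}P_k(f^k)$ with $P_k$ a degree-$\le N$ polynomial, use the truncated second main theorem at the $a$-th roots of unity to bound the number of roots $\h$ with $P_k(\h)\not\equiv 0$, and finish with a Vandermonde argument. Your grouping of $\z$ by $\h=\z^k$ is a small refinement (it yields $|E|(\gcd(a,k)-\varrho)\le 2$, slightly stronger than the paper's $q\le 2/(1-\varrho)$ with $q=|E|\gcd(a,k)$) but leads to the same conclusions.

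There is, however, a genuine error in the final paragraph. You claim that poles of the $\l_i$ at zeros of $f^a-1$ are ruled out ``automatically'' because any such pole would make $B(k)$ infinite. This is false: poles of different $\l_i$ can cancel in the sum $B(k)$. For a concrete instance, take $f=e^z$, $a=1$, $r_1=0$, $r_2=1$, $\e_1=\e_2=1$, $k=1$, $\l_2=\frac{1}{e^z-1}$ and $\l_1=\frac{(e^z-1)^2-e^z}{e^z-1}$; then $B(1)=\l_1+\l_2 e^z=e^z-1$ vanishes at every zero of $f^a-1$, even though both $\l_i$ have poles at every such zero. Consequently the inequality $\sum_{\z\in V_\h}\overline N_{f-\z}(r)\le \overline N_{Q(\h)}(r)$, which your argument rests on, does not hold as stated: a zero of $f-\z$ that is a pole of some $\l_i$ need not be a zero of $Q(\h)$. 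The correct version is
$$\sum_{\z\in V_\h}\overline N_{f-\z}(r)\ \le\ \overline N_{Q(\h)}(r)\ +\ \sum_{i=1}^m \overline N_{\l_i}(r,\infty),$$
and this is exactly why the hypothesis has denominator $m+\widetilde m$ rather than $m$: the $\widetilde m$ non-entire coefficients contribute $\sum_i\overline N_{\l_i}(r,\infty)\le \frac{\widetilde m\varrho}{m+\widetilde m}T_f(r)+S_f(r)$, and together with $T_{Q(\h)}(r)\le\frac{m\varrho}{m+\widetilde m}T_f(r)+S_f(r)$ one still gets the total $\varrho T_f(r)+S_f(r)$. So your reading of the role of $m+\widetilde m$ (``calibrated so that $T_{Q(\h)}$ stays below $\varrho T_f$'') is a misinterpretation; $m$ alone would suffice for that. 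With the corrected inequality the rest of your computation (the splitting, the bound on $|E|$, and both case analyses) goes through unchanged.
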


\begin{remark}
 \begin{enumerate}
\item If (at least) one $r_i$ is different from all the other ones and the
corresponding $\l_i$ does not vanish identically, then the case $e\notdiv k$,
that is $\sum\l_i \e_i^k X^{r_i}$ being the zero polynomial, can of course not
occur.
\item When $f$ is an entire function, we can use $\infty$ as one of the
points when applying the truncated second main theorem in the proof of
Theorem~\ref{main} and obtain $q-1\leq q\varrho$, that is
$q\leq\frac{1}{1-\varrho}$. So in this case the conditions can be relaxed
to $e>\frac{1}{1-\varrho}$ and $p^{1+\ord_p a -\ord_p e} > N+\frac{1}{1-\varrho}$.
\par
If $f$ is an entire function without zeroes, they can even be relaxed to
$e>0$ and $p^{1+\ord_p a -\ord_p e} > N$. If in that case moreover all $\e_i$
are equal to $1$, we can take $e=1$, and then there are no conditions at
all on $a$.
In this special case, vanishing of one $B(k)$ at the zeroes of $f-1$ does
indeed imply $\l_1 + \cdots +\l_m \equiv 0$.
\end{enumerate}
\end{remark}

\begin{exam}
Let
$$B(n)=f^{4n}+f^{3n}+(-f^2)^n+(-f)^n = f^n (f^n +1)(f^{2n}+(-1)^n).$$
Obviously no $B(k)$ vanishes identically. But if $k$ is odd, then
$B(k)$ vanishes at all zeroes of $f^{2k}-1$. So, although $\e_i^2=1$,
we cannot avoid requiring $4|a$ in Theorem~\ref{main}.
\par
Also, if $a$ is any positive integer, then $B(\frac{a}{\gcd(2,a)})$
vanishes at all zeroes of $f-\xi$ for all {\it primitive} $a$-th roots
of unity $\xi$.
\end{exam}

In Theorem~\ref{main} we assume that for a certain integer $a$ there
exists an integer $k$ such that $B(k)$ vanishes at all zeroes of
$f^a -1$. Then we prove that {\it this} $B(k)$ or $B(0)$ vanishes
identically. The following example shows that the growth condition
in Theorem~\ref{main} is the weakest one under which such a statement
holds.

\begin{exam}
Let $f=e^{2z}$ and
$$B(n) = e^{z}f^{2n}-e^{-z}f^n = e^{-z}f^n(f^{n+1}-1).$$
Then $T_{\l_i}(r) \leq \frac{1}{m}T_f(r)+S_f(r)$.
For every positive integer $a$ there exist infinitely many nonnegative
integers $k$ such that $B(k)$ vanishes at all zeroes of $f^a -1$.
(Simply take $k=a-1,\ 2a-1,\ \ldots$.)
But the only $B(n)$ that vanishes identically is $B(-1)$.
\end{exam}

   It is natural to consider next  the case when the coefficients $\l_i$ of $B(n)$ are of the same growth as $f$.   Unfortunately,  this seems to be out of reach, at least for now.  However, for the case of algebraic functions $f$ there is no difficulty to work out
the situation when the coefficients  $\l_i$ of $B(n)$  are algebraic functions.
The second part of this paper is to prove this function field analogue.

Let $\K$ be an algebraic function field (of one variable) over an algebraically closed field $\kk$. Let $C$ be the smooth projective curve defined over $\kk$ associated to $\K$ and write $\gen$ for the genus of $\K$ (or equivalently, of $C$).
For each point $\p\in C$, we may choose a uniformizer $t_\p$ to define a normalized order function $v_\p:=\ord_\p:\K\to \ZZ\cup\{\infty\}$ at $\p$.
Let $S$ be a set of finitely many points of $C$.
Denote by
  $\Ocal_S:=\{f\in \K: v_\p(f)\ge 0 \text{ for   } \p\in C\setminus   S\}$     the ring of $S$-integers and
 ${\Ocal^*_S}:=\{f\in \K:v_\p(f)= 0 \text{ for   } \p\in C\setminus S\} $     the  group of $S$-units.
Then Conjecture \ref{Skolem} can be formulated for the algebraic function field case identically.  In particular, we will prove
the following theorem which  is a stronger analogue of \cite{BBF} in function fields.

 \begin{theorem}\label{functionfields}
Let $\K$ be an algebraic function field with algebraically closed 
constant field $\kk$ of characteristic $0$.
Let $\l_1,\hdots,\l_m$ and $f$ be non-zero elements in $\K$.
Let $S$ be a set of finitely many points of $C$ such
that $\l_1,\hdots,\l_m\in \Ocal_S$ and $f$ is a unit in $\Ocal_S$.
Fix an m-tuple of integers $(r_1,\hdots,r_m)$ and an m-tuple
of roots of unity $(\e_1,...,\e_n)$ and let
$$
B(n):=\l_1(\e_1f^{r_1 })^n+\cdots+\l_m (\e_mf^{r_m})^n.
$$
Assume that for every positive integer $a$ there exists $k \in \ZZ$ such that
$v_\p(B(k))\ge \min\{1, v_\p(f^a -1)\}$ for all $\p\in C\setminus S$.
In other words, $B(k)$ vanishes at all zeros of $f^a -1$ that are not in $S$. 
Then there exists $n\in {\mathbb Z}$ such that $B(n)=0$.
 \end{theorem}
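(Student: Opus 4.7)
The first step is a reduction to a single fixed Laurent polynomial. Let $e$ be a positive integer with $\e_i^e=1$ for all $i$, and for each residue $r\in\{0,1,\ldots,e-1\}$ set
$$P_r(X)\;:=\;\sum_{i=1}^m \l_i\e_i^r X^{r_i}\;\in\;\K[X,X^{-1}],$$
so that $B(n)=P_{n\bmod e}(f^n)$. If some $P_r$ vanishes identically as a Laurent polynomial (after grouping indices by equal values of $r_i$, every resulting coefficient is zero), then $B(n)\equiv 0$ for every $n\equiv r\pmod e$ and the theorem follows at once. Otherwise assume $P_r\not\equiv 0$ for all $r$. For each positive integer $a$ pick $k_a\in\ZZ$ provided by the hypothesis; by pigeonhole on $k_a\bmod e$ there is an $r_0$ and an infinite set $A\subset\NN$ with $k_a\equiv r_0\pmod e$ for all $a\in A$. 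Put $P:=P_{r_0}$, so $B(k_a)=P(f^{k_a})$ for every $a\in A$; and if $P(f^{k_a})=0$ in $\K$ for some $a\in A$ the theorem is proved, so henceforth assume $P(f^{k_a})\ne 0$ for every $a\in A$.

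Next I would derive a divisorial inequality. Let $d:=[\K:\kk(f)]$ and $N:=\max r_i-\min r_i$. Since $f$ is an $\Ocal_S$-unit, all zeros of $f^a-1$ lie in $C\setminus S$; for $a$ avoiding the finitely many ramified values of $f:C\to\PP^1$, at least $ad-C_0$ of them are distinct (the constant $C_0$ depending only on $f$, $S$, $\gen$), and each is a zero of $B(k_a)$. On the other hand, after writing $B(k_a)=f^{(\min r_i)k_a}Q(f^{k_a})$ with $Q\in\K[X]$ of degree $N$ and estimating its pole divisor on $S$ (noting that $f^{(\min r_i)k_a}$ is an $S$-unit), one gets $\deg B(k_a)\le N|k_a|d+C_1$ for a constant $C_1=C_1(\l_1,\ldots,\l_m,S)$. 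These two estimates force $|k_a|\ge(a-C_0-C_1)/N$, so in particular $|k_a|\to\infty$ as $a\to\infty$ in $A$.

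The main step is to turn the "many-zeros" condition into an algebraic identity forcing $P\equiv 0$. For each $\z\in U_a$,
$$P(\z^{k_a})\;=\;\sum_{i=1}^m(\l_i\e_i^{r_0})\z^{k_a r_i}\;\in\;\K$$
has degree at most $M:=\max_i\deg\l_i$, independent of $a$. When $\z$ is unramified for $f$ and has more than $M$ preimages in $C\setminus S$, the vanishing of $P(f^{k_a})$ on those preimages forces $P(\z^{k_a})\equiv 0$ in $\K$, a $\kk$-linear dependence on the fixed elements $\l_i\e_i^{r_0}\in\K$ whose coefficients are $\z^{k_a r_i}$. In the regime $d>M+|S|$ this applies to cofinitely many $\z\in U_a$, and Laurent's theorem (together with the cyclic-subgroup structure of $U_a^{k_a}$ and $|k_a|\to\infty$) forces the $\kk$-linear annihilator of $\{\l_i\e_i^{r_0}\}$ in $\kk^m$ to contain a vector witnessing $P\equiv 0$, a contradiction. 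To handle the remaining regime $d\le M+|S|$ I would enlarge $S$ to $S'$ containing the supports of all $\l_i$, observe that $u:=B(k_a)/(f^a-1)$ is an $S'$-unit (up to bounded ramification corrections), and apply the function-field Brownawell--Masser inequality to the $S'$-unit relation
$$\sum_{i=1}^m \l_i\e_i^{k_a}f^{r_i k_a}\;+\;u\;-\;uf^a\;=\;0,$$
bounding its projective height by $\binom{m+1}{2}(|S'|+2\gen-2)$ while the divisor estimates of the previous paragraph force that height to grow linearly in $a$.

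The main obstacle is this last step: the case analysis for which sub-sums of the Brownawell--Masser relation vanish, and the uniform control when $d$ is small. Compared with Theorem \ref{main}, where the sharp truncated second main theorem for a single $a$ does all the work under a Nevanlinna growth assumption on the $\l_i$, here the stronger hypothesis (quantified over \emph{all} $a$) lets us pigeonhole first and then play many $a$'s off against one another, which is why no growth hypothesis on the $\l_i$ is needed.
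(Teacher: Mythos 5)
Your proposal diverges substantially from the paper's route, and the main step has a genuine gap. The paper reduces to the torsion-free case (so all $\e_i=1$), writes $B(n)=\l_m f^{r_1 n}P(f^n)$ for a \emph{fixed} polynomial $P$, splits $P$ into a factor $P_{\rm dep}$ whose roots are multiplicatively dependent on $f$ and a factor $P_{\rm ind}$ whose roots are independent of $f$, and chooses the modulus to be $a=p^\ell q$ for carefully chosen $p,q$. The decisive tool is the Corvaja--Zannier gcd theorem for function fields, which bounds $N_S(\gcd(P_{\rm ind}(f^n),\,f^{p^\ell q}-1))$ by $O(\ell^{2/3}p^{2\ell/3})$ \emph{uniformly in $n$}; combined with a separate combinatorial lemma controlling $\gcd$ with the cyclotomic factors for $P_{\rm dep}$, this contradicts the linear-in-$p^\ell$ growth of the counting function for large $\ell$. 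The whole point of using the CZ gcd theorem is to obtain a bound that is insensitive to the exponent $k$ coming from the hypothesis. Your proposal does not invoke the CZ gcd theorem (nor any substitute of comparable strength), and this is where it breaks down.

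Concretely, the step claiming $P\equiv 0$ fails. You correctly observe that for cofinitely many $\zeta\in U_a$ (in the regime $d>M+|S|$) one gets $P(\zeta^{k_a})=0$ in $\K$. But as $\zeta$ runs through $U_a$, the values $\zeta^{k_a}$ only run through the subgroup $U_{a/\gcd(a,k_a)}$, and the hypothesis gives no control whatsoever on $\gcd(a,k_a)$. If $\gcd(a,k_a)$ is comparable to $a$ — which the hypothesis permits, for instance $k_a$ a multiple of $a/N$ for all $a\in A$ — then you only obtain $P(\eta)=0$ for $\eta$ in a bounded set (of size $\le N$, the gap of exponents), which does not force a Laurent polynomial with $\K$-coefficients to vanish. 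The divisorial estimate $|k_a|\to\infty$ does not help: $k_a$ can tend to infinity while $\gcd(a,k_a)/a$ stays bounded below. The reference to ``Laurent's theorem and the cyclic-subgroup structure of $U_a^{k_a}$'' does not address this: the issue is not that the orbit of torsion points is hard to describe, but that it can be a \emph{single} point. (Note also that your $B(0)=\sum\l_i$ observation would dispose of the extreme case $k_a\equiv 0\pmod a$, but not the intermediate range.) The fallback via Brownawell--Masser in the regime $d\le M+|S|$ inherits the same difficulty and is, in addition, too schematic: the ``up to bounded ramification corrections'' replacement of $u$ by a genuine $S'$-unit and, above all, the analysis of which subsums of the $(m+2)$-term relation vanish are exactly where the real content of such an argument would lie, and are left unexamined. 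In short, the approach is missing the uniform-in-$k$ gcd bound that makes the paper's argument work; without it, the ``play many $a$'s off against one another'' strategy does not close.
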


 \begin{remark}
 \begin{enumerate}
 \item
For the rank one case for number fields, it follows from the proof of  \cite{BBF} that  the local condition can be relaxed to
``for  every positive integer $a$ there exists $k \in \ZZ$ such that $B(k)$ is contained in the principal ideal $(\a^a -1)$  of $\Ocal_S$ where $\a\in\Ocal_S^*$ is the
generator of the free part of the rank one subgroup under consideration in \cite{BBF}.  Moreover, if one assumes the generalized $abc$-conjecture from \cite{Voj},
 one can conclude the same statement as the above theorem by adapting our arguments.
 \item
The analogue of \cite{BBF} for global function fields, i.e. $\kk$ is a finite field, was proved by Chia-Liang Sun recently in \cite{Sun} with a different approach.
 \end{enumerate}
 \end{remark}

When the height of $\l_i$ is ``small" compared to the height of $f$, one can
give a stronger result that, moreover, is valid in positive characteristic
as well.

 \begin{theorem}\label{functionfieldsconstant}
Let $\K$ be an algebraic  function field with algebraically closed constant 
field $\kk$ of characteristic $p\ge 0$.
Let $f$ be nonconstant in $\K$ and $S$ be a set of finitely many points of 
$C$ such that  $f\in\Ocal_S$.
Let $\l_1,\hdots,\l_m$  be non-zero elements in $\Ocal_S$ such that there exists $\r<1$ such that
$$ \sum_{i=1}^m h(\l_i)\leq \frac{\r h(f)}{\deg_{ins}(f)},$$
where $\deg_{ins}(f)=1$ if $p=0$ and $\deg_{ins}(f)=p^{\ell}$ if $p>0$ and $f\in \K^{p^{\ell}}\setminus \K^{p^{\ell+1}}.$
 Fix an m-tuple of integers $(r_1,\hdots,r_m)$ and an m-tuple
of roots of unity $(\e_1,...,\e_n)$ and let
$$
B(n):=\l_1(\e_1f^{r_1 })^n+\cdots+\l_m (\e_mf^{r_m})^n.
$$
Let $N=\max\{r_1,\hdots,r_m\}-\min\{r_1,\hdots,r_m\}$. Fix  an integer
$e> \frac{2-\r}{1-\r}(2\gen+|S|)$ such that $\e_i^e=1$ for all $i=1,2,\ldots,m$.
(We may also assume that $e$ is not divisible by $p$ if $p>0$.)
Let $a$ be a positive integer such that
$$q^{1+\ord_p a -\ord_p e} > N+\frac{2-\r}{1-\r}(2\gen+|S|)$$
for every prime divisor $q$ of $e$. (This implies of course $e|a$.)
Assume that there exists an integer $k$ such that
$v_\p(B(k))\ge \min\{1, v_\p(f^a -1)\}$ for all $\p\in C\setminus S$.
\par
If $e$ divides $k$, then $\l_1 + \cdots +\l_m \equiv 0$, i.e. $B(0)$
vanishes identically.
\par
If $e$ does not divide $k$, then $\sum\l_i \e_i^k X^{r_i}\in\K [X,X^{-1}]$
is the zero polynomial; so in particular $B(k)$ vanishes identically.
 \end{theorem}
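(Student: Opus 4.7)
The plan is to adapt the proof of Theorem~\ref{main} to the algebraic function field setting, with Nevanlinna's truncated Second Main Theorem replaced by its function field analogue. In characteristic $p>0$, writing $f=g^{\deg_{ins}(f)}$ with $g\in\K\setminus\K^p$ reduces the SMT to the separable case and explains the $\deg_{ins}$ factor in both the hypothesis and the error terms; the assumption $\gcd(e,p)=1$ ensures that the $e$-th roots of unity form a cyclic group of order $e$ in $\kk$.

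Introduce the Laurent polynomial $R(X):=\sum_{i=1}^m\l_i\e_i^kX^{r_i}\in\K[X,X^{-1}]$ and, regrouping by the distinct values of $r_i$, write $R(X)=\sum_rM_rX^r$ with $M_r:=\sum_{r_i=r}\l_i\e_i^k$. Then $B(k)=R(f^k)$, and the hypothesis becomes: for every $a$-th root of unity $\xi$, the element $R(\xi^k)\in\K$ vanishes at every $\p\in C\setminus S$ with $f(\p)=\xi$. If $R\equiv 0$ as a Laurent polynomial (i.e.\ every $M_r=0$), then $\sum_i\l_i\e_i^k=0$; this is the second conclusion when $e\nmid k$, and since $\e_i^k=1$ in the other case, reduces to $\sum_i\l_i=0$ when $e\mid k$. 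So it suffices to derive a contradiction from the assumption $R\not\equiv 0$ (when $e\nmid k$) or $R(1)=\sum_i\l_i\not\equiv 0$ (when $e\mid k$).

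Set $d:=a/\gcd(k,a)$ and $C:=\frac{2-\r}{1-\r}(2\gen+|S|)$. As $\xi$ runs over the $a$-th roots of unity, $\xi^k$ runs over the $d$-th roots of unity, each fibred by $\gcd(k,a)$. Since $X^{-\min_ir_i}R(X)\in\K[X]$ has degree at most $N$, a nonzero $R$ has at most $N$ zeros in $\kk^*$, so at least $d-N$ of the $d$-th roots of unity $\eta$ are ``good'', meaning $R(\eta)\in\K^*$. Let $\Xi$ be the set of $a$-th roots of unity $\xi$ with $R(\xi^k)\not\equiv 0$. If $e\nmid k$, choose a prime $q\mid e$ with $\ord_qk<\ord_qe$; then $d\geq q^{1+\ord_qa-\ord_qe}>N+C$, whence $|\Xi|\geq\gcd(k,a)(d-N)>C$. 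If $e\mid k$, the $e$ elements $\xi$ with $\xi^e=1$ all satisfy $\xi^k=1$, so $R(\xi^k)=R(1)\not\equiv 0$ and $|\Xi|\geq e>C$. For each good $\xi$, $R(\xi^k)\in\K^*$ vanishes at every $\p\in C\setminus S$ with $f(\p)=\xi$, so
$$N_\xi^{(1)}(f,S)\leq h(R(\xi^k))\leq \sum_{i=1}^m h(\l_i)\leq \frac{\r\, h(f)}{\deg_{ins}(f)}.$$
Because $f\in\Ocal_S^*$, we have $N_0^{(1)}(f,S)=N_\infty^{(1)}(f,S)=0$, so the truncated SMT applied to the $|\Xi|+2$ targets $\Xi\cup\{0,\infty\}$ gives
$$|\Xi|\,h(f)\leq \deg_{ins}(f)\left[\sum_{\xi\in\Xi}N_\xi^{(1)}(f,S)+2\gen-2+|S|\right].$$
Substituting the previous bound and using $h(f)\geq\deg_{ins}(f)$ forces $|\Xi|(1-\r)\leq 2\gen-2+|S|$, which contradicts $|\Xi|>C>(2\gen-2+|S|)/(1-\r)$ (the last inequality being equivalent to $(2-\r)(2\gen+|S|)>2\gen-2+|S|$, immediate since $2-\r>1$).

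The main technical obstacle will be the careful formulation of the truncated SMT over a function field of positive characteristic---incorporating both the inseparable-degree correction (via passing to the separable part of $f$) and the exclusion of $S$-places---and then tracking the explicit constants closely enough that the hypotheses on $e$ and $a$ produce the contradiction in both cases. The combinatorial ingredients, namely the counting of good $\xi$'s via roots of unity and the choice of a suitable prime divisor of $e$ when $e\nmid k$, are direct transcriptions of the corresponding arguments from Theorem~\ref{main}.
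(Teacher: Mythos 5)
Your proposal follows essentially the same strategy as the paper: reduce $B(k)$ to a polynomial in $f^k$, observe that the hypothesis forces the specialized coefficients $R(\xi^k)$ to vanish at the zeros of $f-\xi$, use the height bound plus the truncated Second Main Theorem to cap the number of $\xi$ for which $R(\xi^k)\not\equiv 0$, and then use a counting argument on roots of unity to produce a contradiction. Cosmetic differences: you work with the Laurent polynomial $R$ instead of the paper's monic-shifted $P_k(X)=X^{-r_1}R(X)$, and you count $|\Xi|$ directly rather than the paper's two-step bookkeeping (separating the at-most-$2\gen+|S|$ roots of unity $\xi$ with $f-\xi\in\Ocal_S^*$ from the remainder); both routes give the same constant $\tfrac{2-\r}{1-\r}(2\gen+|S|)$.

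There is one genuine (though fixable) slip. You write "Because $f\in\Ocal_S^*$, we have $N_0^{(1)}(f,S)=N_\infty^{(1)}(f,S)=0$" and then gain two free targets $\{0,\infty\}$ in the SMT. But the theorem only hypothesizes $f\in\Ocal_S$, not $f\in\Ocal_S^*$, so $f$ may have zeros outside $S$ and $N_0^{(1)}(f,S)$ need not vanish; moreover, the stated version of the truncated SMT (Theorem~\ref{SMTcff1}) only admits targets in $\kk$, not $\infty$. As a result, the cleaned-up display $|\Xi|\,h(f)\le\deg_{ins}(f)[\,\cdots\,]$ is not justified. This error is harmless only because it is unnecessary: applying the SMT to the $|\Xi|$ targets $\Xi$ alone gives $(|\Xi|-2)(1-\r)\le 2\gen-2+|S|$, i.e.\ $|\Xi|\le 2+\tfrac{2\gen-2+|S|}{1-\r}$, and one checks that $\tfrac{2-\r}{1-\r}(2\gen+|S|)$ still strictly exceeds this bound (the difference is $(2\gen+|S|)+\tfrac{2\r}{1-\r}\ge 1$, using $|S|\ge 1$). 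So drop the $\{0,\infty\}$ targets and the contradiction you seek goes through exactly as in the paper.
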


The structure of the proof of Theorem~\ref{functionfields} is based on \cite{BBF}.
The first ingredient, the gcd theorem in  function fields of characteristic zero, was established  again by Corvaja and Zannier in \cite{CZ2}.
The second ingredient of the proof in \cite{BBF}, i.e. the  inequality of Baker,   will be replaced by the (truncated) second main theorem for algebraic functions from \cite{Wa}.
 Indeed,  the  inequality of Baker  used in  \cite{BBF}  can be replaced by Roth's theorem.
The proof of Theorem~\ref{main} relies on Nevanlinna's theory.

\section{The Complex Case}
\def\theequation{2.\arabic{equation}}
\setcounter{equation}{0}
\subsection{Preliminaries on Nevanlinna Theory}\label{Nevanlinna}
\def\theequation{2.\arabic{equation}}
\setcounter{equation}{0}
We will set up some notation and definitions in
 Nevanlinna theory for complex meromorphic functions and recall some basic results.
We refer to   \cite[Chapter VI ]{La} or \cite[Chapter 1]{Ru} for details.

Let   $f$ be a meromorphic function  and   $z\in \CC$. Denote by
$$\ord_z^+ (f):=\max\{0,\ord_z(f)\}, \quad\text{and }\quad
  \ord_z^- (f):=-\min\{0,\ord_z(f)\}.
$$
The {\it counting function} of $f$ at $\infty $ is defined by
$$N_f(r,\infty):=\sum_{0<|z|\le r } \ord_z^- (f)\log |\frac{r}{z}|+\ord_0^- (f)\log r.$$
Then define the {\it counting function} $N_f(r,a)$ for $a\in\CC$ to be
$$
N_f(r,a):=N_{1/(f-a)}(r, \infty).
$$
The {\it truncated counting function} of $f$ at $\infty $ is defined by
$$\overline{N}_f(r,\infty):=\sum_{0<|z|\le r } \min\{1,\ord_z^- (f)\}\log |\frac{r}{z}|+\min\{1,\ord_0^- (f)\}\log r.$$
Then define the {\it truncated counting function} $\overline{N}_f(r,a)$ for $a\in\CC$ to be
$$
\overline{N}_f(r,a):=\overline{N}_{1/(f-a)}(r, \infty).
$$
The  {\it proximity function} $m_f(r,\infty)$ is defined by
$$
m_f(r,\infty):=\int_0^{2\pi}\log^+|f(re^{i\theta})|\frac{d\theta}{2\pi},
$$
where $\log^+x=\max\{0,\log x\}.$ For any $a\in \CC,$ the {\it proximity function} $m_f(r,a)$ is defined by
$$m_f(r,a):=
m_{1/(f-a)}(r,\infty).
$$
The {\it characteristic function} of $f$ is defined by
$$
T_f(r):=m_f(r,\infty)+N_f(r,\infty).
$$
Finally, we write $S_f(r)$ for any real function $h(r)$ for which
$$\frac{h(r)}{T_f(r)}\to 0\ \ \hbox{\rm as}\ \  r\to\infty,\ r\notin E$$
where $E$ is an exceptional set of finite Lebesgue measure.
\par
We now recall the main theorems.
\begin{theorem}[{\bf First Main Theorem}]\label{Cfirstmain}  Let $f$ be a non-constant  meromorphic function on $\CC$.  Then for every $a\in\CC$, and any positive real number $r$,
$$
m_f(r,a)+N_f(r,a)=T_f(r)+O(1),
$$
where $O(1)$ is a constant independent of $r$.
\end{theorem}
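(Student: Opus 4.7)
The plan is to deduce the First Main Theorem from Jensen's formula in essentially the classical three-step manner. The starting point is Jensen's formula: for a meromorphic function $g$ on $\CC$ with $g(0)\neq 0,\infty$, one has
$$\int_0^{2\pi}\log|g(re^{i\theta})|\,\frac{d\theta}{2\pi}\;=\;N_g(r,0)\,-\,N_g(r,\infty)\,+\,\log|g(0)|,$$
with an analogous formula (differing only by a harmless $O(1)$) when $g$ has a zero or a pole at the origin. I would either invoke this from the references cited just before the theorem, or give a one-line derivation via the harmonic function $\log|g|$ on the punctured disc.

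Next I would rewrite the left-hand integral using the identity $\log|x|=\log^+|x|-\log^+|1/x|$, which expresses it as $m_g(r,\infty)-m_g(r,0)$. Substituting into Jensen's formula and rearranging yields
$$m_g(r,\infty)+N_g(r,\infty)\;=\;m_g(r,0)+N_g(r,0)\,+\,O(1),$$
where the $O(1)$ term depends only on $g$ (specifically, on the initial Laurent coefficient of $g$ at $0$), not on $r$. This is the ``symmetry between $0$ and $\infty$'' form of the First Main Theorem.

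Then I would apply this to $g:=f-a$. By definition $N_g(r,0)=N_f(r,a)$ and $m_g(r,0)=m_f(r,a)$, so the identity becomes
$$m_{f-a}(r,\infty)+N_{f-a}(r,\infty)\;=\;m_f(r,a)+N_f(r,a)+O(1).$$
The left-hand side is $T_{f-a}(r)$.

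It remains to compare $T_{f-a}(r)$ with $T_f(r)$. The poles of $f$ and of $f-a$ coincide with the same multiplicities, so $N_{f-a}(r,\infty)=N_f(r,\infty)$ exactly. For the proximity functions, the elementary inequalities $\log^+|f-a|\leq \log^+|f|+\log^+|a|+\log 2$ and $\log^+|f|\leq \log^+|f-a|+\log^+|a|+\log 2$ give $|m_{f-a}(r,\infty)-m_f(r,\infty)|\leq \log^+|a|+\log 2$. Hence $T_{f-a}(r)=T_f(r)+O(1)$, and combining with the previous display yields the theorem. The only conceptual step is Jensen's formula itself; everything else is bookkeeping with $\log^+$, so I do not anticipate a serious obstacle beyond taking some care with the boundary case where $f(0)\in\{a,\infty\}$, which is handled by absorbing a finite correction into the $O(1)$.
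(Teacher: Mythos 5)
Your argument is correct: it is the standard derivation of the First Main Theorem from Jensen's formula (Jensen for $g=f-a$, the splitting $\log|x|=\log^+|x|-\log^+|1/x|$, and the comparison $T_{f-a}(r)=T_f(r)+O(1)$), which is precisely how the references \cite{La} and \cite{Ru} cited alongside the statement prove it. The paper itself offers no proof — it merely recalls the theorem from those sources — so there is nothing to contrast beyond noting that your write-up fills in the standard argument correctly, including the boundary case $f(0)\in\{a,\infty\}$ via the leading Laurent coefficient.
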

\begin{theorem}[{\bf Truncated Second Main Theorem}]\label{SMT}  Let $f$ be a non-constant  meromorphic function on $\CC$, and $a_1,\hdots,a_q$ be distinct elements in $\CC\cup\{\infty\}$.
 Then for $r>0$,
$$
(q-2)T_f(r)\le_{exc}\sum_{i=1}^q \overline{N}_f(r,a_i)+O(\log^+ T_f(r)),
$$
where $ \le_{exc}  $ means the estimate holds except for $r$ in  a set of finite Lebesgue measure.
\end{theorem}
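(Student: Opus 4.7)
The plan is to follow Nevanlinna's classical derivation: combine the First Main Theorem with the Logarithmic Derivative Lemma to bound $\sum_{i=1}^q m_f(r, a_i)$ from above, and then pass from the resulting untruncated inequality to the truncated form by accounting for the ramification of $f$.

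First I would reduce to the case in which all targets $a_1, \ldots, a_q$ lie in $\CC$; any $a_j = \infty$ among them is handled separately via the trivial bound $\overline{N}_f(r, \infty) \leq T_f(r) + O(1)$ at the end. With all $a_i$ finite and distinct, introduce the auxiliary function $F(z) = \sum_{i=1}^q 1/(f(z) - a_i)$. Since the $a_i$ are separated, whenever $f(z)$ lies near some $a_k$ the corresponding summand dominates by an elementary estimate, which after integrating over $|z|=r$ gives $\sum_{i=1}^q m_f(r, a_i) \leq m_F(r, \infty) + O(1)$. Writing $\frac{1}{f - a_i} = \frac{1}{f'}\cdot \frac{f'}{f - a_i}$ and invoking the Logarithmic Derivative Lemma $m_{f'/(f - a_i)}(r,\infty) = O(\log^+ T_f(r))$ outside an exceptional set of finite Lebesgue measure, I obtain
$$m_F(r,\infty) \leq m_{1/f'}(r,\infty) + O(\log^+ T_f(r)).$$

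Next, the First Main Theorem applied to $1/f'$ yields $m_{1/f'}(r,\infty) \leq T_{f'}(r) - N_{1/f'}(r,\infty) + O(1)$, while the Logarithmic Derivative Lemma applied to $f'/f$ gives $T_{f'}(r) \leq m_f(r,\infty) + N_{f'}(r,\infty) + O(\log^+ T_f(r))$. Using the pole relation $N_{f'}(r,\infty) = N_f(r,\infty) + \overline{N}_f(r,\infty)$, combining with the First Main Theorem identity $\sum_{i=1}^q m_f(r,a_i) = qT_f(r) - \sum_i N_f(r,a_i) + O(1)$, and rearranging produces the untruncated form
$$(q-1)\, T_f(r) \leq_{\mathrm{exc}} \sum_{i=1}^q N_f(r, a_i) + \overline{N}_f(r, \infty) - N_{1/f'}(r,\infty) + O(\log^+ T_f(r)).$$

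For the truncation step, observe that a zero of $f - a_i$ of multiplicity $k \geq 1$ is a zero of $f'$ of order $k - 1$; summing over all $a_i$-points in $|z| \leq r$ gives
$$\sum_{i=1}^q \bigl(N_f(r, a_i) - \overline{N}_f(r, a_i)\bigr) \leq N_{1/f'}(r,\infty).$$
Substituting this replaces each $N_f$ by $\overline{N}_f$ on the right-hand side and cancels $N_{1/f'}$, and absorbing the remaining $\overline{N}_f(r,\infty) \leq T_f(r) + O(1)$ into the left-hand side yields the stated $(q-2)T_f(r)$-bound. The principal technical obstacle is the Logarithmic Derivative Lemma, which is responsible for both the exceptional set of finite Lebesgue measure and the $O(\log^+ T_f(r))$ error term; its proof via the Poisson--Jensen formula together with Borel's growth lemma is standard but delicate, and every reduction above ultimately rests on it.
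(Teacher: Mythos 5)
The paper does not prove this theorem at all: it is recalled as a classical result of Nevanlinna theory with a citation to Lang and to Ru, and is then used as a black box. Your proposal is the standard textbook derivation (auxiliary sum $F=\sum 1/(f-a_i)$, the Logarithmic Derivative Lemma, the First Main Theorem, and the ramification count $\sum_i\bigl(N_f(r,a_i)-\overline{N}_f(r,a_i)\bigr)\le N_{1/f'}(r,\infty)$ to truncate), and it is correct; the only slight imprecision is the bookkeeping for $\infty$ --- if $\infty$ is one of the $q$ targets you keep $\overline{N}_f(r,\infty)$ on the right as one of the $\overline{N}_f(r,a_i)$ rather than absorbing it into the left, and the count $(q-2)$ comes out right either way.
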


\subsection{Proof of Theorem \ref{main}}
\begin{proof}[Proof of Theorem \ref{main}]

Without loss of generality we assume
$r_1\leq r_2\leq \hdots\leq r_m$. Let
$$P_k(X):=\sum_{i=1}^m \l_i(z)\e_i^kX^{r_i -r_1}\in\Mcal[X].$$
Then $B(k)=f^{r_1 k}P_k(f^k)$.
By basic properties of characteristic functions, we have
$$T_{P_k(\gamma)}(r)\leq m\cdot \max\{T_{\l_1}(r),\cdots,T_{\l_m}(r)\}\leq
\frac{m\varrho}{m+\widetilde{m}}T_f(r)+S_f(r)$$
for any complex number $\gamma$.
\par
Now let $\xi$ be an $a$-th root of unity and $z_0$ a zero of $f-\xi$.
Then for each integer $\ell$,  $f^{\ell}(z)$ can be expressed as
$\xi^{\ell}+(z-z_0)g_{\ell}(z)$
where $g_{\ell}(z)$ is a meromorphic function with
$\ord_{z_0}g_{\ell}(z)\geq 0$.
From
$$\l_i(z)\e_i^k f^{r_i k}(z) =
\xi^{r_1 k}\l_i(z)\e_i^k (\xi^k)^{r_i -r_1}+
\l_i(z)\e_i^k (z-z_0)g_{r_i k}(z)$$
we obtain
$$B(k) = \xi^{r_1 k}P_k(\xi^k)+
\sum_{i=1}^m \e_i^k\l_i(z)(z-z_0)g_{r_i k}(z).$$
As $B(k)$ vanishes at $z_0$, we see that every zero of $f-\xi$ that
is not a pole of any $\l_i$ is a zero of $P_k(\xi^k)$. Consequently
$$\overline{N}_f(r,\xi)-\sum_{i=1}^m\overline{N}_{\l_i}(r,\infty)\leq
\overline{N}_{P_k(\xi^k)}(r,0).$$
Thus, either $P_k(\xi^k)\equiv 0$ or from Theorem~\ref{Cfirstmain}
and the above we get
\begin{eqnarray*}
\overline{N}_f(r,\xi) & \leq &
\overline{N}_{P_k(\xi^k)}(r,0)+\sum_{i=1}^m \overline{N}_{\l_i}(r,\infty)\\
 & \leq & T_{P_k(\xi^k)}(r)+\frac{\widetilde m\varrho}{m+\widetilde{m}}T_f(r)+S_f(r)
\leq \varrho T_f(r)+S_f(r).
\end{eqnarray*}
Let $\xi_1,\ldots,\xi_q$ be all $a$-th roots of unity with
$P_k(\xi^k)\not\equiv 0$. Then from Theorem~\ref{SMT} and the
above we get
$$(q-2)T_f(r)\leq\sum_{i=1}^q\overline{N}_f(r,\xi_i) +S_f(r)\leq
q\varrho T_f(r)+S_f(r).$$
This shows $q-2\leq q\varrho$, or equivalently
$q\leq\frac{2}{1-\varrho}$.
\par
If $e$ divides $k$, since $e>\frac{2}{1-\varrho}$ there exists at
least one $e$-th root of unity $\xi$ with
$$0\equiv P_k(\xi^k)\equiv P_k(1)\equiv \l_1(z)+\cdots+ \l_m(z),$$
which proves the first claim of the theorem.
\par
Now we treat the case $e\notdiv k$. Let $b=a/\gcd(a,k)$.
Since there exists a prime $p$ with $\ord_p e >\ord_p k$, by the
assumptions of the theorem, we have $b>N+\frac{2}{1-\varrho}$.
Now raising to the $k$-th power is a surjective homomorphism
from $U_a$ to $U_b$.
So there are at least $N+1$ different $b$-th roots of unity,
say $\beta_0, \beta_1,\ldots, \beta_N$, with
$$P_k(\beta_i)\equiv 0.$$
Since $P_k(X)\in\Mcal[X]$ is a polynomial of degree at most $N$,
we can write it as $P_k(X)=\sum_{j=0}^N c_j(z)X^j$ with $c_j(z)\in\Mcal$.
Then we have
$$\sum_{j=0}^Nc_j(z)\beta_i^j \equiv 0\ \ \ \hbox{\rm for}\ i=0,1,\ldots ,N.$$
This means that the vector in $\Mcal^{N+1}$ with components
$c_0(z),\ldots, c_N(z)$ lies in the kernel of the Vandermonde matrix
$M=((\beta_i^j))_{0\leq i,j\leq N}$. As $M$ is invertible, this is only
possible if all $c_j(z)$ vanish identically, i.e. if $P_k(X)$ is the
zero polynomial. So we have proved that $e\notdiv k$ implies
$B(k)\equiv 0$.
\end{proof}

\section{Algebraic function fields}
\def\theequation{3.\arabic{equation}}
\setcounter{equation}{0}
\subsection{Preliminaries on Value Distribution Theory}
\def\theequation{3.\arabic{equation}}
\setcounter{equation}{0}

Let $\K$ be an algebraic function field over an algebraically closed field $\kk$. Let $C$ be the smooth projective curve defined over $\kk$ associated to $\K$ and write $\gen$ for the genus of $\K$ (or equivalently, of $C$).

We first define valuations and height functions on the function field $\K$. For each point $\p\in C$, we may choose a uniformizer $t_\p$ to define a normalized order function $v_\p:=\ord_\p:\K\to \ZZ\cup\{\infty\}$ at $\p$.
For $f\in \K$, the (relative) height is defined by
$$
h(f)=h_\K(f):=\sum_{\p\in C}-\min\{0,v_\p(f)\}.
$$
Viewing $f$ as a morphism from $C$ to $\PP^1$, then  the degree of $f$ equals $h(f)$.
As the number of zeros (counting multiplicity) of an algebraic function $f$ equals its number of poles, we have
$$
h(f^{-1})=h(f).
$$
For $\x=[x_0:...:x_n]\in\PP^n(\K)$, the projective height is defined by
$$
h(\x)=h_\K(\x):=\sum_{\p\in C}-\min\{ v_\p(x_0),...,v_\p(x_n)\}
$$
which is independent of the choice of the representative vector $(x_0,...,x_n)$.

Let $A(X)=a_nX^n+a_{n-1}X^{n-1}+\cdots+a_1X+a_0\in \K[X]$  with $a_n \ne 0$. Denote by
\begin{align*}
v_\p(A):=  \min\{v_\p(a_0),...,v_\p(a_n)\}\qquad\text{for }\p\in C.
\end{align*}
The  height of   $A$ is defined by
 \begin{align*}
h(A):= h_K([a_0:...:a_n])=-\sum_{\p\in C} v_\p(A).
\end{align*}
We recall that Gauss' lemma in this context says that
 \begin{align}\label{Gauss}
v_\p(AB)=v_\p(A)+v_\p(B),
\end{align}
where $A$ and $B$ are in $\K[X]$ and $\p\in C$. Consequently, we have that
 \begin{align}\label{Gaussht}
h(AB)=h(A)+h(B).
\end{align}
In particular, if $A(X)=\prod_{i=1}^m(X-\b_i)$  with $\b_i\in\K$,
then
\begin{align}\label{Gausshtlinear}
h(A)=\sum_{i=0}^m h(\b_i).
\end{align}

Let $S$ be a set containing a finite number of  points of $C$  and  $\b$ a non-zero element in $\K$.  The  truncated counting function with respect to $ S$   is defined by
$$
  \overline{N}_{S}(\b):=\sum_{\p\in C\setminus  S}\min\{1,\max\{0,v_\p(\b)\}\}.
$$
The following  version of the second main theorem for $\K$ can be easily obtained from \cite[Theorem 1]{Wa}.
\begin{theorem}\label{SMTcff1}
Let  $p\ge 0$ be  the characteristic of $\K$,
 $S$ a set containing a finite number of  points of $C$ and $b_1,...,b_q$  
distinct elements in ${\bf k}$.
If $f\notin {\bf k} $, then
\begin{align*}
(q-2)\frac{h(f)}{\deg_{ins}(f)} 
\le \sum_{i=1}^q  \overline{N}_S(f-b_i)+\chi_S,
\end{align*}
where $\deg_{ins}(f)=1$ if $p=0$ and $\deg_{ins}(f)=p^{\ell}$ if $p>0$ and $f\in \K^{p^{\ell}}\setminus \K^{p^{\ell+1}}$, and  $\chi_S=2\gen-2+|S|$.
\end{theorem}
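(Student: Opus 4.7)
The plan is to adapt the proof of Theorem \ref{main} to the algebraic function field setting, using Theorem \ref{SMTcff1} in place of the Nevanlinna second main theorem and the height $h(\cdot)$ in place of the characteristic function. After reordering so that $r_1 \leq \cdots \leq r_m$, I introduce
$$P_k(X) = \sum_{i=1}^m \l_i \e_i^k X^{r_i - r_1} \in \K[X],$$
so that $B(k) = f^{r_1 k} P_k(f^k)$. Fix an $a$-th root of unity $\xi \in \kk$ and a zero $\p \in C\setminus S$ of $f-\xi$. Since each $\l_i \in \Ocal_S$ is regular at $\p$ and $f^{r_i k}$ has Taylor expansion $\xi^{r_i k} + (\text{higher order in the uniformizer})$ at $\p$, a direct computation yields $B(k)(\p) = \xi^{r_1 k} P_k(\xi^k)(\p)$. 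Hence if $P_k(\xi^k) \not\equiv 0$, every such $\p$ is a zero of $P_k(\xi^k)$; combining with the height estimate $h(P_k(\xi^k)) \leq \sum_i h(\l_i) \leq \r h(f)/\deg_{ins}(f)$ (using subadditivity of $h$ and $\kk$-invariance) gives
$$\overline{N}_S(f - \xi) \leq \overline{N}_S(P_k(\xi^k)) \leq h(P_k(\xi^k)) \leq \frac{\r\, h(f)}{\deg_{ins}(f)}.$$

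Next I would apply Theorem \ref{SMTcff1} to $f$ with the target set $\{\xi_1, \ldots, \xi_q\}$ of those $a$-th roots of unity for which $P_k(\xi_j^k) \not\equiv 0$. Substituting the bound above, this yields
$$(q(1-\r) - 2)\,\frac{h(f)}{\deg_{ins}(f)} \leq 2\gen - 2 + |S|.$$
Writing $f = g^{\deg_{ins}(f)}$ with $g \notin \K^p$ non-constant gives $h(f)/\deg_{ins}(f) = h(g) \geq 1$, so the inequality forces
$$q \leq \frac{2\gen + |S|}{1-\r} < \frac{2-\r}{1-\r}(2\gen + |S|) < e.$$

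With $q < e$ in hand, the conclusion follows the combinatorial pattern of Theorem \ref{main}. If $e \mid k$, then $U_e \subset U_a$ has $|U_e| = e$ (where $p \nmid e$ is invoked in positive characteristic), and since at most $q < e$ elements of $U_e$ can be among $\{\xi_1, \ldots, \xi_q\}$, at least one $e$-th root $\xi$ satisfies $P_k(\xi^k) \equiv 0$; because $\xi^k = 1$ and $\e_i^k = 1$, this reads $\sum_i \l_i \equiv 0$. If $e \nmid k$, pick a prime $q_0 \mid e$ with $\ord_{q_0}(e) > \ord_{q_0}(k)$ and set $b = a/\gcd(a,k)$; then $\ord_{q_0}(b) \geq 1 + \ord_{q_0}(a) - \ord_{q_0}(e)$, so by the hypothesis on $a$ one gets $b \geq q_0^{1+\ord_{q_0}(a)-\ord_{q_0}(e)} > N + q$. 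The $k$-th power map $U_a \twoheadrightarrow U_b$ ensures that at most $q$ elements of $U_b$ arise as some $\xi_j^k$, leaving at least $N+1$ distinct $b$-th roots of unity annihilating $P_k(X)$; since $\deg P_k \leq N$, a Vandermonde argument in $\K[X]$ forces $P_k \equiv 0$.

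The main obstacle I anticipate is extracting a clean bound on $q$ from the SMT inequality. In the complex case, $T_f(r) \to \infty$ instantly absorbs the genus/support term, but in the function field case $h(f)/\deg_{ins}(f)$ is a fixed positive integer, so one must exploit the structural lower bound $h(f)/\deg_{ins}(f) \geq 1$ coming from non-constancy of $f$ and verify that the hypothesis $e > \frac{2-\r}{1-\r}(2\gen + |S|)$ is strong enough to dominate this bound. Additional care in positive characteristic is needed to handle inseparability via $\deg_{ins}(f)$ and to ensure $|U_e| = e$.
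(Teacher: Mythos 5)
Your proposal does not prove the statement in question. The statement to be established is Theorem~\ref{SMTcff1} itself, i.e.\ the truncated second main theorem for algebraic function fields: the inequality $(q-2)h(f)/\deg_{ins}(f) \le \sum_{i=1}^q \overline{N}_S(f-b_i)+\chi_S$. What you have written is instead a proof of Theorem~\ref{functionfieldsconstant} (the power-sum result with small coefficients $\l_i$), and it explicitly \emph{invokes} Theorem~\ref{SMTcff1} as a black box in its second step (``Next I would apply Theorem~\ref{SMTcff1} to $f$ with the target set $\{\xi_1,\ldots,\xi_q\}$\dots''). Relative to the task, this is circular: you are assuming exactly the inequality you were asked to justify.

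A genuine proof of Theorem~\ref{SMTcff1} has to come from somewhere else entirely. The paper obtains it as an easy consequence of Wang's truncated second main theorem for function fields (\cite[Theorem~1]{Wa}). If you wanted to argue it from scratch in characteristic $0$, the standard route is Riemann--Hurwitz: writing $d=h(f)=\deg f$ for the separable morphism $f:C\to\PP^1$, one has $2\gen-2=-2d+\deg\mathfrak{R}$ for the ramification divisor $\mathfrak{R}$, and since the ramification sitting over $b_1,\ldots,b_q$ contributes at least $\sum_{i=1}^q\bigl(d-\overline{n}(f^{-1}(b_i))\bigr)$ to $\deg\mathfrak{R}$, one gets $(q-2)d\le\sum_i\overline{n}(f^{-1}(b_i))+2\gen-2$; discarding the fibre points lying in $S$ costs at most $|S|$ in total (each point of $C$ lies over at most one $b_i$), which produces $\chi_S=2\gen-2+|S|$. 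In characteristic $p>0$ one first writes $f=g^{p^\ell}$ with $g$ separable, which is where the factor $\deg_{ins}(f)$ enters. None of this reasoning appears in your proposal, so as an answer to the stated problem it has a complete gap. (As a proof of Theorem~\ref{functionfieldsconstant}, your outline does track the paper's argument reasonably closely, but that is not the statement you were asked to prove.)
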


\begin{cor}\label{SMTcor}
Let  $p\ge 0$ be  the characteristic of $\K$,
 $S$ a set containing a finite number of  points of $C$ and $b_1,...,b_q$ 
distinct elements in ${\bf k}$.
 If $f\notin\kk$ and $f-b_i$, $1\le i\le q$, are $S$-units, then
 $q\le2\gen+|S|$.
 \end{cor}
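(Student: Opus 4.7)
The plan is to derive the corollary as an immediate consequence of Theorem~\ref{SMTcff1} by observing that the $S$-unit hypothesis kills every term on the right-hand side of the truncated second main theorem.

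First I would unpack the definitions. Saying that $f-b_i$ is an $S$-unit means $v_\p(f-b_i)=0$ for all $\p\in C\setminus S$. In particular $\max\{0,v_\p(f-b_i)\}=0$ outside $S$, so
$$\overline{N}_S(f-b_i)=\sum_{\p\in C\setminus S}\min\{1,\max\{0,v_\p(f-b_i)\}\}=0$$
for every $i=1,\ldots,q$. Since the $b_i\in\kk$ are pairwise distinct, Theorem~\ref{SMTcff1} applies directly to $f\notin\kk$ and gives
$$(q-2)\frac{h(f)}{\deg_{ins}(f)}\le\sum_{i=1}^q\overline{N}_S(f-b_i)+\chi_S=2\gen-2+|S|.$$

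Next I would verify the mild but crucial lower bound $h(f)/\deg_{ins}(f)\ge 1$. Writing $f=g^{p^\ell}$ with $g\in\K\setminus\K^p$ (or $\ell=0$ in characteristic zero), the map $f\colon C\to\PP^1$ factors through $g$ with $\deg(f)=p^\ell\deg(g)$, so $h(f)=\deg_{ins}(f)\cdot h(g)$. Because $f\notin\kk$ forces $g\notin\kk$, the function $g$ is a nonconstant map to $\PP^1$ and hence has positive degree, giving $h(g)\ge 1$. Substituting this into the displayed inequality yields $q-2\le 2\gen-2+|S|$, i.e. $q\le 2\gen+|S|$.

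There is really no obstacle to speak of here: the whole content is in Theorem~\ref{SMTcff1}, and the only thing worth being careful about is the inseparable-degree bookkeeping in positive characteristic, which is handled by the identity $h(f)=\deg_{ins}(f)\cdot h(g)$ above.
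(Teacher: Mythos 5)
Your proof is correct and is exactly the intended argument: the paper states the corollary without proof precisely because it follows immediately from Theorem~\ref{SMTcff1} once one notes that the $S$-unit hypothesis forces $\overline{N}_S(f-b_i)=0$ and that $h(f)/\deg_{ins}(f)\ge 1$ for nonconstant $f$ (a fact the paper invokes in the same way inside the proof of Theorem~\ref{functionfieldsconstant}). Your inseparable-degree bookkeeping is also the right justification for that last inequality.
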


Let $f$, $g$ be non-zero element in $\Ocal_S$. The counting function and the truncated counting function of the greatest common divisor of $f$ and $g$ in $\Ocal_S$
are  denoted by
$$
  N_{S}(\gcd(f,g)):=\sum_{\p\in C\setminus  S}\min\{v_\p(f),v_\p(g)\},
$$
and
$$
 \overline{N}_{S}(\gcd(f,g)):=\sum_{\p\in C\setminus  S}\min\{1,v_\p(f),v_\p(g)\}.
$$
The following theorem on greatest common divisors over function fields is due to Corvaja and Zannier \cite[Corollary 2.3]{CZ2}.
\begin{theorem}\label{CZgcd}
Let   the characteristic of $\K$ be zero and
 $S$ be a set containing a finite number of  points of $C$.
Let $a,b\in \Ocal_S^*$ be $S$-units, not both constant.  If $a,b$ are multiplicatively independent, we have
$$
N_{S}(\gcd(1-a,1-b))\le 3\sqrt[3]{2}(h(a)h(b)\chi_S)^{\frac13},
$$
where $\chi_S=2\gen-2+|S|$.
\end{theorem}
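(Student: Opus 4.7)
The plan is to follow the Corvaja--Zannier strategy from \cite{CZ2}: reduce the bound to the Schmidt subspace theorem for function fields of characteristic zero (e.g.\ Wang's version), applied to a projective point built from monomials in $a$ and $b$.

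First I write $M := N_S(\gcd(1-a,1-b)) = \sum_{\p \in T} w_\p$, where $T \subset C \setminus S$ is the finite set of points at which both $1-a$ and $1-b$ vanish, and $w_\p := \min\{v_\p(1-a),\, v_\p(1-b)\} \ge 1$. The crucial congruence is that for $\p \in T$ and any non-negative integers $i,j$, we have $a^i b^j \equiv 1 \pmod{\p^{w_\p}}$, so $v_\p(a^i b^j - a^{i'} b^{j'}) \ge w_\p$ for every pair of exponents.

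Next I introduce integer parameters $H, K$ to be optimized, set $r := (H+1)(K+1)$, and form the projective point $\mathbf{x} := [a^i b^j]_{0 \le i \le H,\, 0 \le j \le K} \in \PP^{r-1}(\K)$; its coordinates are distinct $S$-units because $a,b$ are multiplicatively independent. I then apply the function field subspace theorem with the following choice of linear forms: at each $\p \in T$, take $X_{(0,0)}$ together with the differences $X_{(i,j)} - X_{(0,0)}$, whose product has valuation $\ge (r-1) w_\p$ by the congruence above; at each $\p \in S$, take the coordinate forms; and elsewhere only coordinate forms. The exceptional subspace from the subspace theorem is ruled out by multiplicative independence of $a, b$, which prevents any nontrivial $\kk$-linear relation among the $a^i b^j$. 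Using $h(\mathbf{x}) \le H h(a) + K h(b)$, this yields an inequality of schematic form
$$
(r-1)\, M \;\le\; r\bigl( H h(a) + K h(b) \bigr) + r(r-1)\, \chi_S,
$$
and dividing by $r-1$ gives $M \lesssim H h(a) + K h(b) + r\, \chi_S$.

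Finally, I optimize by balancing the three positive terms $H h(a)$, $K h(b)$ and $H K \chi_S$; three-term AM--GM with the balanced choice $H \asymp (h(b)\chi_S)^{1/3} h(a)^{-2/3}$ and the symmetric choice for $K$ yields the exponent $1/3$ and, after tracking constants, the sharp factor $3\sqrt[3]{2}$. The main obstacle will be obtaining this sharp leading constant: it requires a quantitative version of the function field subspace theorem with error linear in $\chi_S$ and clean scaling in the ambient dimension $r$, together with careful handling of the $+1$'s in $(H+1)(K+1)$ so that their contribution falls into subleading order once the optimal (non-integer) $H, K$ are rounded to integers.
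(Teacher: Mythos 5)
First, a point of reference: the paper does not prove Theorem~\ref{CZgcd} at all --- it is quoted as an external result from Corvaja--Zannier \cite[Corollary 2.3]{CZ2} --- so there is no in-paper argument to compare with. Judged on its own terms, your outline correctly identifies the circle of ideas behind the Corvaja--Zannier bound (monomials $a^ib^j$, the congruence $a^ib^j\equiv 1$ modulo the gcd, a subspace-theorem/Wronskian-type main inequality, optimization of the parameters $H,K$), but the quantitative core is missing, and the schematic inequality you write down cannot yield the theorem.

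Concretely: from $(r-1)M\le r\bigl(Hh(a)+Kh(b)\bigr)+r(r-1)\chi_S$ one gets $M\le\frac{r}{r-1}\bigl(Hh(a)+Kh(b)\bigr)+r\chi_S$, and since every term on the right increases with $H$ and $K$, the best choice is $H=K=1$, giving a bound of order $h(a)+h(b)+\chi_S$ --- no better than the trivial bound $M\le\min\{h(a),h(b)\}$ and far weaker than $3\sqrt[3]{2}\,(h(a)h(b)\chi_S)^{1/3}$, which is $o(h(a)+h(b))$ when the heights are large compared to $\chi_S$. Your proposed balancing $H\asymp (h(b)\chi_S)^{1/3}h(a)^{-2/3}$ is typically less than $1$ and in any case does not minimize your own right-hand side: the product $Hh(a)\cdot Kh(b)\cdot HK\chi_S=H^2K^2h(a)h(b)\chi_S$ is not the fixed quantity that three-term AM--GM requires. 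To obtain the cube root, the surviving inequality must have the shape $M\le h(a)/K+h(b)/H+c\,HK\chi_S$, whose three terms do have constant product; in the actual Corvaja--Zannier argument this arises because at the places of $S$ the monomials $a^ib^j$ have widely spread valuations, so the local contribution from $S$ essentially cancels the term $r\,h(\mathbf{x})$ on the right-hand side (equivalently, one runs the Wronskian/derivation argument directly). Nothing in your sketch produces that cancellation. Separately, the degenerate case is not ``ruled out by multiplicative independence'': a nontrivial $\kk$-linear relation $\sum c_{ij}a^ib^j=0$ is an $S$-unit equation that can perfectly well hold for multiplicatively independent $a,b$ (for instance $a+b=1$), and it must be dealt with via the function-field unit-equation theorem, which supplies the required height bound directly in that case. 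As written, the proposal is a plan rather than a proof, and its central inequality would fail to deliver the stated estimate.
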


\subsection{Proofs of Theorems \ref{functionfields} and \ref{functionfieldsconstant}}

\begin{proof}[Theorem \ref{functionfieldsconstant}]
Without loss of generality we assume
$r_1\leq r_2\leq \hdots\leq r_m$. Let
$$P_k(X):=\sum_{i=1}^m \l_i \e_i^kX^{r_i -r_1}\in\K[X].$$
Then $B(k)=f^{r_1 k}P_k(f^k)$.
By basic properties of height functions, we have
$$
h(P_k(\g))\le \sum_{i=1}^m h(\l_i)\le  \frac{\r h(f)}{\deg_{ins}(f)}
$$
for any $\g\in\kk$.

By Corollary \ref{SMTcor}, there are at most $2\gen+|S|$  non-zero $c$ in $\kk$
such that $f-c$ is an $S$-unit as $f$ is non-constant in $\Ocal_S$.
Therefore, there are at least $a-2\gen-|S|$ $a$-th roots of unity $\xi$ such that
$f-\xi$ is not an $S$-unit.  Consequently,  $f-\xi$ has a zero outside of $S$.
Without loss of generality, we assume that $\xi_i$, $1\le i\le  a-2\gen-|S|$, are distinct $a$-th roots of unity such that
$f-\xi_i \in\Ocal_S\setminus\Ocal_S^*$.
\par
Now let $\p_0\in C\setminus S$ be 
a zero of $f-\xi_i $ for some $1\le i\le  a-2\gen-|S|$.
Then for each integer $j$,  $f^{j}$ can be expressed as
$\xi_i^{j }+g_{j}$
where $g_{j}\in\Ocal_S$  with
$v_{\p_0}(g_{j})\geq 1$.
From
$$\l_i \e_i^k f^{r_i k}  =
\xi^{r_1 k}\l_i \e_i^k (\xi^k)^{r_i -r_1}+
\l_i\e_i^k  g_{r_i k} $$
we obtain
$$B(k) = \xi^{r_1 k}P_k(\xi^k)+
\sum_{i=1}^m \e_i^k\l_ig_{r_i k}.$$
As $B(k)$ vanishes at $\p_0$, we see that $P_k(\xi_i^k)$ vanishes at $\p_0$ as well.
Consequently
$$
\overline{N}_S (f-\xi_i)\le \overline{N}_S(P_k(\xi_i^k)).
$$
Thus, either $P_k(\xi_i^k)\equiv 0$ or from the above
we get
\begin{align}
\overline{N}_S (f-\xi_i)&\le \overline{N}_S(P_k(\xi_i^k))\cr
&\le h(P_k(\xi_i^k))\le \sum_{i=1}^m h(\l_i)\le  \frac{\r h(f)}{\deg_{ins}(f)}.
\end{align}
Let $\xi_1$,...,$\xi_q$ be all $a$-th roots of unity with $f-\xi_i\not\in\Ocal_S^*$ and
$P_k(\xi_i^k)\not\equiv 0$.
Then from Theorem \ref{SMTcff1} and the above we get
$$
(q-2) \frac{  h(f)}{\deg_{ins}(f)}\le \sum_{i=1}^q \overline{N}_S(f-\xi_i)+2\gen-2+|S|\le q\r \frac{ h(f)}{\deg_{ins}(f)}+2\gen-2+|S|.
$$
This shows $q-2-q\r\le 2\gen-2+|S|$ since $ \frac{  h(f)}{\deg_{ins}(f)}\ge 1$.  Consequently,
$q\le \frac{2\gen+|S|}{1-\r}$.
\par
We may rearrange the order of the $a$-th roots of unity  $\xi_i$ again and assume that
$P_k(\xi_i^k)\equiv 0$ for $1\le i\le a- \frac{2-\r}{1-\r}(2\gen+|S|)$.
\par
The rest of the argument is identical to the last part of
the proof of Theorem \ref{main} and will be omitted.
\end{proof}

Some parts of the proof of Theorem \ref{functionfields} are similar to the number field case in \cite{BBF}.  We will omit some arguments that are similar to the number field case.
We first note that it suffices to prove Conjecture \ref{Skolem} for the case when the multiplicative group generated by $\a_1,\hdots,\a_m$ is torsion-free. (cf. \cite[Section 4.1]{BBF})
Similarly, we may assume that  the multiplicative group generated by $\e_1f,\hdots,\e_mf$ is torsion free.  Hence, it suffices to show the following for the proof of Theorem \ref{functionfields}.

\begin{theorem}\label{functionfields1}
 Let $S$ be a set of finitely many points of $C$ and let
  $\l_1,\hdots,\l_m\in \Ocal_S$  and   $f\in \Ocal_S^*$.
Fix an m-tuple of pairwise distinct integers $(r_1,\hdots,r_m)$   and let
$$
 B(n):=\l_1  f^{r_1n } +\cdots+\l_m   f^{r_mn}.
 $$
 Assume that for every positive integer $a$ there exists $k \in \ZZ$ such that
$v_\p(B(k))\ge \min\{1, v_\p(f^a -1)\}$ for all $\p\in C\setminus S$. Then there exists $n\in {\mathbb Z}$ such that $B(n)=0$.
 \end{theorem}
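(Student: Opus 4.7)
The plan is to follow the structure of Bartolome, Bilu, and Luca \cite{BBF}, replacing their use of Baker's theorem on linear forms in logarithms with the truncated second main theorem for algebraic functions (Theorem~\ref{SMTcff1}), and using the function-field analogue of the Corvaja--Zannier gcd theorem (Theorem~\ref{CZgcd}). Assume for contradiction that $B(n)\neq 0$ for every $n\in\ZZ$. Introduce the Laurent polynomial $Q(Y):=\sum_{i=1}^m\l_iY^{r_i}$, so that $B(k)=Q(f^k)$. After passing to a finite extension $\K'$ of $\K$ and enlarging $S$ to a finite set $S'$ of points of the associated curve, I may assume $Q(Y)=\l_mY^{r_1}\prod_{j=1}^N(Y-\b_j)$ with $\l_m$ and all $\b_j$ being $S'$-units, where $N=r_m-r_1$. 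The standing assumption then says $f^n\neq \b_j$ for every $n\in\ZZ$ and every $j$.

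For each positive integer $a$, let $k=k(a)$ be supplied by the hypothesis. Every zero $\p\in C'\setminus S'$ of $f^a-1$ is also a zero of $B(k)$, and hence of $f^k-\b_j$ for some $j$. Setting $\xi=f(\p)\in U_a$, the pair of $S'$-units $1-f^a$ and $1-\b_jf^{-k}$ both vanish at $\p$. Provided $f$ and $\b_j$ are multiplicatively independent modulo $\kk^*$, Theorem~\ref{CZgcd} gives
$$N_{S'}\bigl(\gcd(1-f^a,\,1-\b_jf^{-k})\bigr)\le 3\sqrt[3]{2}\bigl(ah(f)(h(\b_j)+|k|h(f))\chi_{S'}\bigr)^{1/3}.$$
Summing over $j$ bounds $\overline{N}_{S'}(f^a-1)$ from above, while Theorem~\ref{SMTcff1} applied to $f$ (an $S'$-unit) with the $a$-th roots of unity as targets yields the lower bound $\overline{N}_{S'}(f^a-1)\ge (a-2)h(f)-\chi_{S'}$. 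Comparing these forces $|k(a)|\gg a^2 h(f)/\chi_{S'}$ for all sufficiently large $a$; combining this quadratic lower bound on $|k|$ with the elementary height bound $h(B(k))=O(|k|h(f))$ on the size of the zero set of $B(k)$, and iterating along a large family of $a$'s as in \cite[\S4.2]{BBF}, yields the desired contradiction.

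The main obstacle is the multiplicatively dependent case, in which $\b_j^s=cf^t$ for some integers $s>0$, $t$ and a constant $c\in\kk^*$. Since $\kk$ is algebraically closed, the relation forces $\b_j$ to be a constant multiple of an integral power of an $s$-th root of $f$ in $\K'$. When $s=1$ this directly gives $\b_j=f^n$ for some $n$, contradicting $B(n)\neq 0$ immediately; the case $s>1$ is handled by passing to the $s$-th root of $f$ and iterating the reduction, as in \cite[\S4.1]{BBF}. The remaining bookkeeping for the finite extension $\K'/\K$ and the enlargement $S\subseteq S'$ is routine, as enlarging $S$ only weakens the vanishing condition and the new invariants $\gen'$ and $|S'|$ remain finite.
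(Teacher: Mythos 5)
Your overall plan—pitting the Corvaja--Zannier gcd upper bound against the truncated second main theorem lower bound, in the spirit of \cite{BBF}—matches the paper's strategy. But there are two real gaps.

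The first is that you never normalize $k$ modulo $a$. Your gcd upper bound, as written, contains $h(\beta_jf^{-k})\le h(\beta_j)+|k|h(f)$, so the CZ estimate scales like $(a\cdot|k|)^{1/3}$ and the comparison with the SMT lower bound $\gtrsim a\,h(f)$ merely gives you the lower bound $|k(a)|\gg a^2$ on $|k|$. A lower bound on $|k|$ is not a contradiction: the hypothesis supplies some $k$ for each $a$, and there is nothing preventing $|k|$ from being huge. The subsequent appeal to $h(B(k))=O(|k|h(f))$ also only produces another lower bound on $|k|$, and ``iterating along a large family of $a$'s'' is not a proof. The fix is the observation the paper exploits inside its Lemma~\ref{claimI}: since the gcd is taken with a divisor of $f^{a}-1$, one has $f^{k}\equiv f^{k'}\ (\mathrm{mod}\ f^a-1)$ in $\Ocal_S$ whenever $k\equiv k'\ (\mathrm{mod}\ a)$, so one may assume $0\le k<a$. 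With that, your upper bound becomes $O(a^{2/3})$ while the lower bound is $\Omega(a)$, and the contradiction is immediate for $a$ large.

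The second gap is the multiplicatively dependent case, which you essentially wave off. When some root $\beta$ satisfies $\beta^{s}=cf^{t}$ with $c\in\kk^*$, $c\neq1$, the relation does \emph{not} make $\beta$ equal to a power of $f$, and ``passing to the $s$-th root of $f$'' does not reduce to the independent situation: the constant $c$ persists after taking roots, the CZ theorem still does not apply, and there is no built-in contradiction with $B(n)\neq0$. The paper's treatment is genuinely different: it splits $P$ as $P_{\mathrm{ind}}P_{\mathrm{dep}}$, picks the integer $q$ so that $\beta^{q}=f^{r_\beta}$ for every root $\beta$ of $P_{\mathrm{dep}}$, chooses a prime $p\nmid q$ with $p\nmid(r_\beta-r_{\beta'})$ whenever $r_\beta\neq r_{\beta'}$, works with $a=p^{\ell}q$ and the \emph{factor} $g=\Phi_{p^{\ell}}(f)\Phi_{p^{\ell}q}(f)$ of $f^{a}-1$ rather than with $f^{a}-1$ itself, and then proves (Lemma~\ref{claimD}) that at most one of $\gcd(P_{\mathrm{dep}}(f^n),\Phi_{p^{\ell}}(f))$ and $\gcd(P_{\mathrm{dep}}(f^n),\Phi_{p^{\ell}q}(f))$ can be nontrivial. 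That cyclotomic dichotomy is what bounds the dependent contribution by $\varphi(p^{\ell}q)h(f)$ and lets the $\varphi(p^{\ell})h(f)$ from the independent side survive as $\ell\to\infty$. None of this is captured by your sketch, and without it the argument does not close.
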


Without loss of generality, we assume that $r_1<r_2<\hdots<r_m$. Then we write
$$
B(n)=\l_m f^{r_1n} P(f^n),
$$
where
$$
P(T)=T^{r_m-r_1}+\frac{ \l_{m-1}}{ \l_{m}}T^{r_{m-1}-r_1}+\hdots+\frac{ \l_{1}}{ \l_{m}}.
$$
We assume that $P(f^k)\ne 0$ for all $k\in \ZZ$.  We also note that $P(f^k)=0$ is equivalent to  $B(k)=0$.
We will first find an integer $a$   and use the assumption of Theorem \ref{functionfields1} to obtain
 an integer $n$ such that $v_\p(P(f^n))\ge \min\{1, v_\p(f^a-1)\}$ for $\p\in C\setminus S$.
We then derive a contradiction with our assumption that $P(f^k)\ne 0$ for all $k\in \ZZ$. Consequently,
this shows that there exists an integer $k$ such that $P(f^k)=0$ and hence $B(k)=0$.

Similar to the number field case, we assume that $P(T)$ splits into linear factors in $\K$ by replacing $\K$ by a finite extension of $\K$.
We may also enlarge the size of $S$ such that all the zeros of $P(T)$ are $S$-units.

We split the polynomial $P(T)$ into two factors: $P(T)=P_{\rm ind}(T)P_{{\rm dep}}(T)$, such that each of the roots of $P_{\rm ind}(T)$
is multiplicatively independent of $f$, and those of
$P_{{\rm dep}}(T) $   are   multiplicatively  dependent with $f$.
 We take $q=2$ if all the roots of $P(T)$ are multiplicatively independent with $f$.  Otherwise, we may choose a smallest positive integer $q$
 such that $\b^q $ is a power of $f$ for every root $\b$  of  $P_{\rm dep }(T)$, i.e.
 $\b^q=f^{r_{\b}}$ with $r_{\b}\in\ZZ$.
We note that $q\ne 1$ since we have assumed that $P(f^k)\ne 0$ for all $k\in\ZZ$.
Now choose a prime integer $p$, not dividing $q$,  such that
  $r_{\b}-r_{\b' }$ is not divisible by $p$ for all  zeros $\b$ and $\b'$ of
$P_{{\rm dep}}(T)$ such that $r_{\b}\ne r_{\b' }$.
Recall that for a positive integer $k$,  we denote by $\Phi_{k}(T)$ the $k$-th cyclotomic polynomial, i.e.
$\Phi_{k}(T)=\prod_{\mu} (T-\mu)$, where $\mu$ runs over all primitive $k$-th roots of unity.
Let
$$a=p^{\ell }q,\quad\text{and }\quad g= \Phi_{p^{\ell}}(f)\Phi_{p^{\ell }q}(f),$$
where the positive integer $\ell$  will be specified later.
We need the following two lemmas.

\begin{lemma}\label{claimD}
  Let $n$ be an integer.  Then, either $N_S({\rm gcd}(P_{{\rm dep} }(f^n),\Phi_{p^{\ell}}(f)))=0$
 or $N_S({\rm gcd}(P_{{\rm dep}, }(f^n),\Phi_{p^{\ell}q}(f)))=0$.
 \end{lemma}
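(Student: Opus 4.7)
The plan is to argue by contradiction, exploiting the divisibility restrictions built into the choice of $p$ and $q$. Assume both counting functions are positive. Then there exist points $\p_1,\p_2\in C\setminus S$ at which $P_{\rm dep}(f^n)$ vanishes together with $\Phi_{p^\ell}(f)$ and $\Phi_{p^\ell q}(f)$ respectively. Because $f$ and the roots of $P_{\rm dep}$ are all $S$-units (thanks to the earlier enlargement of $S$), specialization at these places is well defined; the defining property of cyclotomic polynomials says that $\zeta_i:=f(\p_i)$ is a primitive $p^\ell$-th (respectively, primitive $p^\ell q$-th) root of unity. Writing $P_{\rm dep}(T)=\prod_\beta (T-\beta)$, the vanishing of $P_{\rm dep}(f^n)$ at $\p_i$ furnishes a root $\beta_i$ of $P_{\rm dep}$ with $\beta_i(\p_i)=\zeta_i^n$.

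The next step is to extract congruences by raising to the $q$-th power. Specializing the identity $\beta_i^q=f^{r_{\beta_i}}$ at $\p_i$ yields $\zeta_i^{nq-r_{\beta_i}}=1$. For $i=1$ this reads $p^\ell\mid nq-r_{\beta_1}$, and for $i=2$ we obtain $p^\ell q\mid nq-r_{\beta_2}$. The latter forces $q\mid r_{\beta_2}$, say $r_{\beta_2}=qs$. Reducing both congruences modulo $p^\ell$ gives $r_{\beta_1}\equiv nq\equiv r_{\beta_2}\pmod{p^\ell}$. Since $\ell\ge 1$, we in particular have $p\mid r_{\beta_1}-r_{\beta_2}$, and the prime $p$ was chosen so as to divide no non-zero difference of exponents $r_\beta-r_{\beta'}$; hence $r_{\beta_1}=r_{\beta_2}=qs$.

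Finally I would upgrade this equality of exponents to an equality of functions. The relation $\beta_1^q=f^{qs}=(f^s)^q$ shows that $\beta_1/f^s\in\K$ satisfies $X^q-1$; it is therefore algebraic over $\kk$, and since the constant field of $\K$ is algebraically closed, $\beta_1=\omega_1 f^s$ for some $\omega_1\in\kk$ with $\omega_1^q=1$. Evaluating at $\p_1$ yields $\omega_1=\zeta_1^{n-s}$, whose order divides both $q$ and $p^\ell$, which are coprime; so $\omega_1=1$ and $\beta_1=f^s$. Since $\beta_1$ is a root of $P_{\rm dep}$, this gives $P(f^s)=0$, i.e.\ $B(s)=0$, contradicting the standing hypothesis that $P(f^k)\ne 0$ for every $k\in\ZZ$.

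The main obstacle is the passage from $r_{\beta_1}\equiv r_{\beta_2}\pmod{p^\ell}$ to the strict equality $r_{\beta_1}=r_{\beta_2}$: this is the sole place where the auxiliary prime $p$ intervenes, and the definition of $p$ was tailored precisely to make this inference valid. The remaining manipulations are straightforward order bookkeeping using $\gcd(p,q)=1$ and the algebraic closure of $\kk$.
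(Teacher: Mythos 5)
Your proof is correct and follows essentially the same route as the paper's: both arguments extract the divisibility relations $p^\ell\mid nq-r_{\beta_1}$ and $p^\ell q\mid nq-r_{\beta_2}$ by specializing $\beta_i^q=f^{r_{\beta_i}}$ at the two chosen points, use the tailoring of $p$ to force $r_{\beta_1}=r_{\beta_2}$, deduce $q\mid r_{\beta_1}$, write the root as a constant $q$-th root of unity times a power of $f$, and kill that constant via the coprimality of $p^\ell$ and $q$, yielding a root of $P_{\rm dep}$ of the form $f^s$ and hence the contradiction $B(s)=0$. The only cosmetic difference is the order of a couple of deductions (you obtain $q\mid r_{\beta_2}$ before equating the $r$'s, the paper after) and that you phrase the last step as an order argument on $\omega_1$ rather than via $p^\ell\mid(n-i)$; the substance is identical.
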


\begin{lemma}\label{claimI}
  $N_S(\gcd(P_{\rm ind}(f^n),g)) \le 3\sqrt[3]2\deg P\cdot \chi_S^{\frac13}\cdot [p^{\ell}qh(f)+h(P)]^{\frac23}$, where
$h(P):=h(\l_1,...,\l_m)$.
 \end{lemma}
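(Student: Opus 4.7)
The plan is to apply the Corvaja--Zannier gcd bound (Theorem~\ref{CZgcd}) to each linear factor of $P_{\rm ind}(f^n)$ paired against $f^{p^\ell q}-1$, using that $g$ divides $f^{p^\ell q}-1$ as a polynomial in $f$ (since $p\nmid q$, both $\Phi_{p^\ell}$ and $\Phi_{p^\ell q}$ are among the cyclotomic factors of $X^{p^\ell q}-1$). The key preparatory step will be a reduction modulo $p^\ell q$. At any $\p\in C\setminus S$ with $v_\p(g)\ge 1$, the value $f(\p)$ is a primitive $p^\ell$-th or $p^\ell q$-th root of unity, so $f^{p^\ell q}(\p)=1$, giving $v_\p(f^n-f^{n'})\ge 1$ where $n':=n\bmod p^\ell q$. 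The ultrametric inequality then gives $v_\p(f^n-\beta)\ge 1$ if and only if $v_\p(f^{n'}-\beta)\ge 1$ for every root $\beta$ of $P_{\rm ind}$, so
$$N_S(\gcd(P_{\rm ind}(f^n),g))=N_S(\gcd(P_{\rm ind}(f^{n'}),g))\le N_S(\gcd(P_{\rm ind}(f^{n'}),f^{p^\ell q}-1)),$$
reducing matters to $0\le n'<p^\ell q$.

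Next, using that $P_{\rm ind}(T)=\prod_\beta(T-\beta)$ splits over $\K$ (as already arranged in the proof) with each $\beta$ an $S$-unit after enlarging $S$, and that $\min\{1,\sum_i x_i,y\}\le\sum_i\min\{1,x_i,y\}$, I would pass to
$$N_S(\gcd(P_{\rm ind}(f^{n'}),f^{p^\ell q}-1))\le\sum_\beta N_S(\gcd(f^{n'}-\beta,f^{p^\ell q}-1)).$$
For each such $\beta$, multiplicative independence of $\beta$ and $f$ immediately implies that $\beta^{-1}f^{n'}$ and $f^{p^\ell q}$ are multiplicatively independent $S$-units (a hypothetical relation $(\beta^{-1}f^{n'})^s(f^{p^\ell q})^t=c$ with $c$ constant forces $s=t=0$). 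Theorem~\ref{CZgcd} then gives
$$N_S(\gcd(f^{n'}-\beta,f^{p^\ell q}-1))\le 3\sqrt[3]{2}\bigl(h(\beta^{-1}f^{n'})\,h(f^{p^\ell q})\,\chi_S\bigr)^{1/3}.$$

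The final step consists of crude height estimates: $h(\beta^{-1}f^{n'})\le h(\beta)+n'h(f)\le h(P)+p^\ell q\,h(f)$ (using $n'<p^\ell q$ and $h(\beta)\le\sum_\beta h(\beta)=h(P_{\rm ind})\le h(P)$, which follows from~\eqref{Gaussht} and~\eqref{Gausshtlinear}), while $h(f^{p^\ell q})=p^\ell q\,h(f)\le p^\ell q\,h(f)+h(P)$. Each summand is thus at most $3\sqrt[3]{2}\,\chi_S^{1/3}\bigl(p^\ell q\,h(f)+h(P)\bigr)^{2/3}$, and summing over the at most $\deg P$ values of $\beta$ yields the claimed bound. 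The degenerate case $n'=0$ is trivial, since $P_{\rm ind}(1)=\prod_\beta(1-\beta)$ is a nonzero constant ($\beta\ne 1$ by multiplicative independence) and the corresponding gcd vanishes. The main subtlety is the congruence reduction at the zeros of $g$; once that is settled, both the multiplicative independence verification and the height bookkeeping are routine.
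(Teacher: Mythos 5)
Your proof follows the paper's approach almost step for step: reduce $n$ modulo $p^\ell q$, factor $P_{\rm ind}(f^{n'})$ into the linear factors $f^{n'}-\beta$, apply Theorem~\ref{CZgcd} to the pair $\beta^{-1}f^{n'}$, $f^{p^\ell q}$, bound the heights, sum over the at most $\deg P$ roots, and use $g\mid f^{p^\ell q}-1$ at the end. The explicit verification of multiplicative independence is a good addition that the paper leaves implicit.

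There is, however, a genuine imprecision in the reduction step. You justify $N_S(\gcd(P_{\rm ind}(f^n),g))=N_S(\gcd(P_{\rm ind}(f^{n'}),g))$ by arguing only that $v_\p(f^n-\beta)\ge 1$ if and only if $v_\p(f^{n'}-\beta)\ge 1$, and likewise you invoke $\min\{1,\sum_i x_i,y\}\le\sum_i\min\{1,x_i,y\}$, which has a stray truncation. This controls $\overline{N}_S$, but the lemma (and the rest of the proof of Theorem~\ref{functionfields1}) is about the untruncated $N_S$, which counts with multiplicity. The correct ingredient, and the one the paper uses, is the ring congruence $f^n\equiv f^{n'}\pmod{f^{p^\ell q}-1}$ in $\Ocal_S$. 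Since $P_{\rm ind}(X)-P_{\rm ind}(Y)$ is divisible by $X-Y$, this gives $v_\p\bigl(P_{\rm ind}(f^n)-P_{\rm ind}(f^{n'})\bigr)\ge v_\p(f^{p^\ell q}-1)\ge v_\p(g)$, and then the ultrametric inequality yields $\min\{v_\p(P_{\rm ind}(f^n)),v_\p(g)\}=\min\{v_\p(P_{\rm ind}(f^{n'})),v_\p(g)\}$ at every $\p\notin S$, i.e.\ equality of the full $N_S$-gcd, not just the truncated one. Also, your side remark about $n'=0$ overreaches: $P_{\rm ind}(1)=\prod_\beta(1-\beta)$ need not be a constant, since the roots $\beta$ of $P_{\rm ind}$ are generally nonconstant elements of $\K$; but this case is covered by the general Corvaja--Zannier estimate in any event, so nothing is lost.
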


We will now prove Theorem \ref{functionfields1}  by assuming Lemma \ref{claimD}  and Lemma \ref{claimI}.

\begin{proof}[Proof of Theorem \ref{functionfields1}]
Let $a=p^{\ell }q$   and  $g= \Phi_{p^{\ell}}(f)\Phi_{p^{\ell }q}(f)$  be as defined above.
First of all,  it follows from the assumption that there exists an integer $n$ such that $v_\p(P(f^n))\ge \min\{1, v_\p(f^a-1)\}$ for $\p\in C\setminus S$.
Since  $f^{a}-1$ is divisible by $ g$, it implies that
\begin{align*}
\overline{N}_S({\rm gcd}(P(f^n), g)&=\overline{N}_S( g)=\overline{N}_S(\Phi_{p^{\ell}}(f)\Phi_{p^{\ell }q}(f))\\
&=\sum_{\xi}\overline{N}_S(f- \xi)+\sum_{\eta}\overline{N}_S(f-\eta),
\end{align*}
where $\xi$ runs through all primitive $p^{\ell}$-th roots of unity and $\eta$ runs through all primitive $p^{\ell}q$-th roots of unity.
Together with Theorem \ref{SMTcff1}, we have
\begin{align}\label{lowerboundA}
\overline{N}_S({\rm gcd}(P(f^n), g))\ge (\varphi(p^{\ell})+ \varphi(p^{\ell}q)-2) h (f)-\chi_S.
\end{align}

Secondly, since $P(T)=P_{\rm ind}(T)P_{\rm dep}(T)$, we have
\begin{align}\label{uperboundA1}
N_S({\rm gcd}(P(f^n),g))\le N_S({\rm gcd}(P_{\rm ind}(f^n),g))+N_S({\rm gcd}(P_{\rm dep}(f^n),g)).
\end{align}
It follows from Lemma \ref{claimD}   that
\begin{align}\label{uperboundA2}
&N_S({\rm gcd}(P_{{\rm dep} }(f^n),g)) \cr
&=\max \{N_S({\rm gcd}(P_{{\rm dep}}(f^n),\Phi_{p^{\ell}}(f))),
N_S({\rm gcd}(P_{{\rm dep}}(f^n),\Phi_{p^{\ell}q}(f)))\}\cr
&\le \max \{N_S( \Phi_{p^{\ell}}(f)),
N_S( \Phi_{p^{\ell}q}(f))\}\cr
&\le \varphi(p^{\ell}q)h(f).
\end{align}
It then follows from (\ref{lowerboundA}),  (\ref{uperboundA2}) and  Lemma \ref{claimI} that  we have
\begin{align*}
(\varphi(p^{\ell}) -2) h (f)-\chi_S&\le 3\sqrt[3]2\deg P \chi_S^{\frac13}[p^{\ell}qh(f)+h(P)]^{\frac23}.
\end{align*}
Since $h(f)$ is a positive integer, this inequality is impossible   if $\ell$ is taken to be sufficiently large.
This contradiction implies that there must be an integer $n$ such that $P(f^n)=0$ and hence $B(n)=0$.
\end{proof}
It is now left to show Lemma \ref{claimD}  and Lemma \ref{claimI}.

\begin{proof}[Proof of Lemma  \ref{claimD}]
The assertion holds trivially, if $\deg P_{\rm dep}(T)=0$.  Therefore, we assume that $\deg P_{\rm dep}(T)\ge 1.$
Suppose that $N_S({\rm gcd}(P_{{\rm dep} }(f^n),\Phi_{p^{\ell}}(f)))\ne 0$
and $N_S({\rm gcd}(P_{{\rm dep}}(f^n),\Phi_{p^{\ell}q}(f)))\ne 0 $.
Then there exist two point $\p$ and $\q$ in $C\setminus S$ such that
$v_\p(P_{{\rm dep} }(f^n))$, $v_\p(\Phi_{p^{\ell}}(f))$, $v_\q(P_{{\rm dep}}(f^n))$ and $v_\q(\Phi_{p^{\ell}q}(f))$
are all positive.
Consequently,  $ f^n(\p)= \b(\p)$ for some $\b$ to be a root of
$P_{{\rm dep} }(T)$, and $f(\p)$ is a primitive $p^{\ell}$-th root of unity;
$ f^n(\q)=\widetilde{\b}(\q)$ for some $\widetilde{\b}$ to be a root of
$P_{{\rm dep} }(T)$, and $f(\q)$ is a primitive $p^{\ell}q$-th root of unity.
By the construction of $q$, we may write $\b^q=f^r$ and
$\widetilde{\b}^q=f^{r'}$.
Then $f^{nq}-\b^q=f^{nq}-f^r.$  Since $f$ is an $S$-unit, $f(\p)\ne 0$.
Then $ f^n(\p)= \b(\p)$ implies that
\begin{align}
 f^{nq-r}(\p)=1.
\end{align}
As $f(\p)$ is a primitive $p^{\ell}$-th root of unity,
$nq-r$ is divisible by $p^{\ell}$.
Similarly, the conditions on the point $\q$ imply that $nq-r'$ is divisible by $p^{\ell}q$.
Hence, $r-r'$ is divisible by $p^{\ell}$.  However, our choice of $p$ implies that
$r$ must equal $r'$.  Therefore, $nq-r $ is divisible by $p^{\ell}q$ and hence $r$ is divisible by $q$.
The relation $\b^q=f^r$ then implies that we may write $\b=\xi f^i$  where $i$ is an integer  and $\xi^q=1$.  Then $ f^n-\b=f^n-\xi f^i $.   Since $f$ is an $S$-unit, this implies that $v_\p( f^{n-i}-\xi)=v_\p( f^n-\b)>0$.  Therefore, $f^{n-i}(\p)=\xi$ and hence $f^{q(n-i)}(\p)=1$.  Then $p^{\ell}|q(n-i)$ since $f(\p)$ is a primitive $p^{\ell}$-th root of unity.  Since $q$ is not divisible by $p$, it follows that $p^{\ell}|(n-i)$. Then, $\xi=f^{n-i}(\p)=1$ which implies that $\b=f^i$ contradicting our assumption that $P(f^k)\ne 0$ for all $k\in\ZZ$.
\end{proof}

\begin{proof}[Proof of Lemma  \ref{claimI}]
Let $\b$ be a zero of $P_{\rm ind}(T)$.  We will first show that for any integer $n$,
\begin{align}\label{gcd1}
N_S(\gcd(f^n-\b, f^{p^{\ell}q}-1))\le3\sqrt[3]2 \chi_S^{\frac13}[p^{\ell}qh(f)+h(P)]^{\frac23}.
\end{align}
To show this assertion, we may assume that $0\le n\le p^{\ell}q$
since $n\equiv n'  {\rm mod}\, ( p^{\ell}q)$ implies that  $f^n\equiv f^{n'}\,  {\rm mod}\, ( f^{p^{\ell}q}-1)$ in the ring $\Ocal_S$.

 Since $\b$ is an $S$-unit by our assumption,
 $$
 N_S(\gcd(f^n-\b, f^{p^{\ell}q}-1))=N_S(\gcd(\b^{-1}f^n-1, f^{p^{\ell}q}-1)).
 $$
Theorem \ref{CZgcd} then implies that
\begin{align}\label{gcd2}
N_S(\gcd(f^n-\b, f^{p^{\ell}q}-1)) \le3\sqrt[3]2 \chi_S^{\frac13}[h(\b^{-1}f^n) h(f^{p^{\ell}q} ) ]^{\frac13}.
\end{align}
By basic properties of height functions, we have
\begin{align}\label{gcd3}
h(\b^{-1}f^n) h(f^{p^{\ell}q} )&\le p^{\ell}q h(f)(nh(f)+h(\b)) \cr
&\le p^{\ell}q h(f)(p^{\ell}qh(f)+h(P) )\qquad(\text{ by }  (\ref{Gausshtlinear}))\cr
&\le (p^{\ell}qh(f)+h(P) )^2
\end{align}
Then (\ref{gcd1}) follows from (\ref{gcd2}) and (\ref{gcd3}).
Since
\begin{align}
 N_S(\gcd(P_{\rm ind}(f^n),f^{p^{\ell}q}-1))  \le \sum_{\b} N_S(\gcd(f^n-\b, f^{p^{\ell}q}-1)),
\end{align}
where $\b$ runs through the zeros of $P_{\rm ind}(T)$, we may deduce from (\ref{gcd1})  that
\begin{align}
 N_S(\gcd(P_{\rm ind}(f^n),f^{p^{\ell}q}-1))  \le 3\sqrt[3]2\deg P \chi_S^{\frac13}[p^{\ell}qh(f)+h(P)]^{\frac23}.
\end{align}
The assertion of the Lemma then follows immediately since $f^{p^{\ell}q}-1$ is divisible by $g=\Phi_{p^{\ell}}(f)\Phi_{p^{\ell }q}(f)$.
\end{proof}

\subsection*{Acknowledgements}
The authors thank Felipe Voloch for bringing \cite{BBF} to their attention.
They also thank Chia-Liang Sun for helpful discussions.

\end{document}